\newtheorem{theorem}{Theorem}
\newtheorem{lemma}{Lemma}
\newtheorem{example}{Example}
\newtheorem{remark}{Remark}
\newcommand\myfootnote[1]{
\renewcommand{\thefootnote}{}
\footnotetext{#1}
\def\thefootnote{\@arabic\c@footnote}
}
\renewcommand{\subsection}{\@startsection{subsection}{2}{0mm}{-\baselineskip}{-5pt}{\it \bf}}
\title{SPECTRA OF LOCALLY MATRIX ALGEBRAS}
\author{OKSANA BEZUSHCHAK}
\begin{document}

\address{Faculty of Mechanics and Mathematics, \\ Taras Shevchenko  National University of Kyiv, \\  60	Volodymyrska Street, Kyiv 01033, Ukraine}

\email{mechmatknubezushchak@gmail.com}

\begin{abstract} We describe spectra of associative (not necessarily unital and not necessarily countable--dimensional) locally matrix algebras. We determine all possible spectra of locally  matrix algebras and give  a new proof of Dixmier--Baranov Theorem. As an application of our description of spectra we determine embeddings of locally matrix algebras.
\end{abstract}

\subjclass{08A05, 16S50}
	
\keywords{Locally matrix algebra; Steinitz number; specrum;  embedding.}
\maketitle
\section{Introduction}

Let $\mathbb{F}$ be a ground field. Recall that an associative $\mathbb{F}$--algebra $A$ is called a \emph{locally matrix algebra} (see, \cite{Kurosh}) if for an arbitrary finite subset of $A$ there exists a  subalgebra $B \subset A$ containing this subset and such that  $B$ is isomorphic to a matrix algebra $ M_n(\mathbb{F})$ for some $n\geq 1.$ In what follows we will sometimes identify $B$ and $M_n(\mathbb{F}),$ that is, assume that $M_n(\mathbb{F}) \subset A.$ We call a locally matrix algebra \emph{unital} if it contains~$1.$

Let $A$ be a countable--dimensional unital locally matrix algebra. In \cite{Glimm}, J.G.~Glimm defined  the  Steinitz number $\mathbf{st} (A)$ of the algebra $A$ and proved that $A$ is uniquely determined by $\mathbf{st} (A).$ J.~Dixmier \cite{Diskme}  showed that non-unital countable--dimensional locally matrix algebras over the field of comlex numbers can be parametrized by pairs $(s,\alpha),$ where $s$ is a Steinitz number and $\alpha$ is a nonnegative real number. A.A.~Baranov \cite{Baranov2} extended this parametrization to  locally matrix algebras over arbitrary fields.

In \cite{BezOl}, we defined the  Steinitz number $\mathbf{st} (A)$ for a unital  locally matrix algebra $A$ of an arbitrary dimension. We showed that for a unital  locally matrix algebra $A$ of  dimension $> \aleph_0$ the Steinitz number $\mathbf{st} (A)$ no longer determines $A;$ see, \cite{BezOl_2,Morita_BO}. However,  it determines the universal elementary theory of $A$ \cite{BezOl_2}.

In this paper for an  arbitrary (not necessarily unital and not necessarily countable--dimensional) locally matrix algebra $A,$ we  define a subset of $\mathbb{SN}$ that we call the spectrum of $A$ and denote as $\text{Spec}(A).$ We determine all possible spectra of locally  matrix algebras and give  a new proof of Dixmier--Baranov Theorem. As an application of our description of spectra we determine embeddings of locally matrix algebras.

\section{Spectra of locally  matrix algebras}

Let $ \mathbb{P} $ be the set of all primes and $ \mathbb{N} $ be the set of all positive integers. A
{\it Steinitz   number} (see, \cite{ST})  is an infinite formal
product of the form
\begin{equation*}\label{1}
	\prod_{p\in \mathbb{P}} p^{r_p} \ , \end{equation*}
where   $ r_p\in  \mathbb{N} \cup \{0,\infty\}$ for all $p\in \mathbb{P}.$

Denote by $ \mathbb{SN} $ the set of all Steinitz numbers. Notice, that the set of all positive integers $ \mathbb{N}$ is subset of $\mathbb{SN}$.    The numbers  $ \mathbb{SN}\setminus   \mathbb{N}  $  are called {\it infinite}  Steinitz numbers.

Let $A$ be a  locally matrix algebra with a unit $1$ over a field $F$ and let $D(A)$ be  the  set of all positive integers $n$ such that  there is a subalgebra  $A'$, $1 \in A'\subseteq A$, $A' \cong  M_n(F)$. Then the  least common multiple of the set $D(A)$ is called the {\it Steinitz  number  of the algebra} $A$ and denoted as $\mathbf{st}(A)$; see, \cite{BezOl}.

Now let $A$ be a (not necessarily unital) locally matrix algebra. For an arbitrary  idempotent $0 \neq e \in A$ the subalgebra $eAe$ is a unital locally matrix algebra. That is why we can talk about its Steinitz  number  $\mathbf{st}  (eAe).$ The subset
 \begin{equation*}\label{Spec(A)} \text{Spec}  (A) \ = \ \big\{ \ \mathbf{st}  (eAe) \ | \ e \ \in \ A, \ e \ \neq 0, \ e^2 \ = \ e \  \big\},
 \end{equation*} where  $e$ runs through all nonzero idempotents of the algebra  $A,$ is called the \emph{spectrum} of the algebra $A.$

For a Steinitz  number $s$ let $ \Omega  (s)$ denote the set of all natural numbers $n  \in  \mathbb{N}$ that  divide $s$.

 For a Steinitz  numbers $s_1,$ $s_2$ we say that $s_1$ \emph{finitely divides}  $s_2$ if there exists $b  \in \Omega  (s_2)$ such that $s_1  = s_2 / b$ (we denote: $ s_1 \, \big|\,_{fin} \,s_2$).

Steinitz  numbers $s_1,$ $s_2$ are \emph{rationally connected} if $s_2=q\cdot s_1,$ where $q$ is some rational number.

We call a subset  $S \subset \mathbb{SN}$ \emph{saturated} if
 \begin{enumerate}
\item[$1)$] any two Steinitz  numbers from $S$ are rationally connected;
\item[$2)$] if $s_2 \in S$ and $ s_1 \, \big|\,_{fin} \,s_2 $  then $s_1 \in S;$
\item[$3)$] if $s,$ $ns \in S,$  where $n\in \mathbb{N},$ then  $is \in S$ for any $i,$ $1 \leq i \leq n.$
\end{enumerate}

\begin{theorem}\label{Th1_Spec} For an arbitrary locally matrix algebra $A$ its spectrum is a saturated subset of $\mathbb{SN}.$
\end{theorem}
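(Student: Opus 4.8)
The plan is to reduce the three closure properties to rank bookkeeping inside finite matrix subalgebras of $A$. The first step is to reformulate the Steinitz number of a corner. For a nonzero idempotent $e\in A$ and a matrix subalgebra $C\subseteq A$ with $e\in C$ and $C\cong M_{n_C}(F)$, let $\operatorname{rk}_C(e)$ be the rank of $e$ in $C\cong M_{n_C}(F)$, so that $eCe\cong M_{\operatorname{rk}_C(e)}(F)$ is a unital (with unit $e$) subalgebra of $eAe$. Using the standard fact that a unital embedding $M_a(F)\hookrightarrow M_b(F)$ forces $a\mid b$ and multiplies the rank of every idempotent by $b/a$, I would show
\[
\mathbf{st}(eAe)=\operatorname{lcm}\{\operatorname{rk}_C(e)\ :\ C\ \text{a matrix subalgebra of }A,\ e\in C\};
\]
since $C\subseteq C'$ implies $\operatorname{rk}_C(e)\mid\operatorname{rk}_{C'}(e)$, and since the matrix subalgebras of $A$ containing a fixed finite set are directed under inclusion, this lcm is unchanged when $C$ ranges over any cofinal subfamily.

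Next I would attach to a pair of idempotents $e\le g$ (meaning $e\neq 0$, $eg=ge=e$) the rational ``relative rank'' $[e:g]=\operatorname{rk}_C(e)/\operatorname{rk}_C(g)\in\mathbb{Q}\cap(0,1]$, where $C$ is any matrix subalgebra containing $e$ and $g$, and prove it is independent of $C$: if $C\subseteq C'$, then $C$ sits unitally inside the corner $1_CC'1_C\cong M_m(F)$ with $m=\operatorname{rk}_{C'}(1_C)$, and because $e,g\le 1_C$ the rank of each of $e,g$ in $C'$ equals its rank in that corner, which by the embedding fact is $\operatorname{rk}_C(\cdot)\cdot m/n_C$; the ratio is preserved, and directedness does the rest. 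Combining this with the first step and the elementary identity $\operatorname{lcm}(q n_i)=q\cdot\operatorname{lcm}(n_i)$ (valid for a fixed positive rational $q$ and positive integers $n_i$ with all $q n_i$ integral) yields the key formula $\mathbf{st}(eAe)=[e:g]\cdot\mathbf{st}(gAg)$ for all $e\le g$.

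The three conditions are then short. Given nonzero idempotents $e,f$, choose a matrix subalgebra $C_0$ containing both and put $g=1_{C_0}$, so $e,f\le g$: then $\mathbf{st}(eAe)=[e:g]\,\mathbf{st}(gAg)$ and $\mathbf{st}(fAf)=[f:g]\,\mathbf{st}(gAg)$ differ by the rational factor $[f:g]/[e:g]$, which is $1)$. For $2)$, let $s_2=\mathbf{st}(fAf)$ and $b\in\Omega(s_2)$; a prime-by-prime argument over the directed family $\{\operatorname{rk}_C(f)\}$ gives a matrix subalgebra $C\ni f$ with $b\mid\operatorname{rk}_C(f)$, and an idempotent $e$ of rank $\operatorname{rk}_C(f)/b$ (nonzero, as $b\mid\operatorname{rk}_C(f)$) inside $fCf\cong M_{\operatorname{rk}_C(f)}(F)$ satisfies $e\le f$ and $[e:f]=1/b$, whence $\mathbf{st}(eAe)=s_2/b$. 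For $3)$, if $\mathbf{st}(eAe)=s$ and $\mathbf{st}(fAf)=ns$ with $e,f\le g=1_{C_0}$, the formula forces $\operatorname{rk}_{C_0}(f)=n\operatorname{rk}_{C_0}(e)$, hence for $1\le i\le n$ we have $i\operatorname{rk}_{C_0}(e)\le\operatorname{rk}_{C_0}(f)\le n_{C_0}$, so $C_0$ contains an idempotent $g_i$ of rank $i\operatorname{rk}_{C_0}(e)$, and $\mathbf{st}(g_iAg_i)=i\,[e:g]\,\mathbf{st}(gAg)=is$.

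The step I expect to be the main obstacle is the well-definedness of $[e:g]$: matrix subalgebras of $A$ need not share a common unit, so ranks in $C$ and in a larger $C'$ cannot be compared naively. The remedy, as above, is to route the inclusion $C\subseteq C'$ through the corner $1_CC'1_C$ and to observe that the rank of an idempotent dominated by $1_C$ is the same whether computed in $C'$ or in that corner. Granting this point, the remainder is finite-dimensional rank arithmetic plus two soft facts about Steinitz numbers (the lcm--scaling identity and the divisibility-in-a-directed-family lemma used for $2)$).
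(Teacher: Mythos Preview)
Your approach is essentially the paper's: both hinge on the relative-range formula $\mathbf{st}(eAe)=[e:g]\cdot\mathbf{st}(gAg)$ for $e\le g$. The paper imports this formula from \cite{Kurochkin} and \cite{Morita_BO}, whereas you derive it from scratch; your well-definedness argument for $[e:g]$ via the corner $1_C\,C'\,1_C$ is correct and a worthwhile self-contained addition. Conditions $1)$ and $2)$ go through exactly as you outline.

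There is, however, a real gap in your argument for condition $3)$ (and the paper's own proof contains the identical step). From $[e:g]\,\mathbf{st}(gAg)=s$ and $[f:g]\,\mathbf{st}(gAg)=ns$ you infer $\operatorname{rk}_{C_0}(f)=n\,\operatorname{rk}_{C_0}(e)$; this amounts to cancelling the Steinitz number $\mathbf{st}(gAg)$, which is illegitimate when it carries an infinite prime exponent. Concretely, take $s=2^{\infty}$, $n=6$, $C_0\cong M_3(F)$ with $\operatorname{rk}_{C_0}(e)=1$, $\operatorname{rk}_{C_0}(f)=3$, and $\mathbf{st}(gAg)=3\cdot 2^{\infty}$: then $[e:g]\,\mathbf{st}(gAg)=2^{\infty}=s$ and $[f:g]\,\mathbf{st}(gAg)=3\cdot 2^{\infty}=6\cdot 2^{\infty}=ns$, yet $\operatorname{rk}_{C_0}(f)=3\neq 6=n\,\operatorname{rk}_{C_0}(e)$, and $C_0$ has no idempotent of rank $5$. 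The repair is immediate and does not even use $e$: since $n\mid ns$, the unital corner $fAf$ (with $\mathbf{st}(fAf)=ns$) contains a unital copy of $M_n(F)$; a rank-$i$ idempotent $g_i$ therein satisfies $[g_i:f]=i/n$, whence $\mathbf{st}(g_iAg_i)=(i/n)\cdot ns=is$ for every $1\le i\le n$.
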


Let us consider examples of saturated subsets of $\mathbb{SN}.$

\begin{example}
For an arbitrary natural number $n$ the set  $\{1,2,\ldots,n\}$ is saturated.
\end{example}

\begin{example}
Let $s$ be a Steinitz  number. The set  $$S (\infty,s )  \ :=  \ \Big\{ \ \frac{a}{b} \ \cdot \ s \ \Big| \ a \ \in \ \mathbb{N}, \ b \ \in \ \Omega (s) \ \Big\}$$ is saturated. For an arbitrary Steinitz number $s' \in S (\infty,s )$ we have  $S (\infty,s ) =S  (\infty,s' ).$ If $s\in \mathbb{N}$ then $S (\infty,s )=\mathbb{N}.$
\end{example}

\begin{example} Let $r$ be a real number, $1 \leq  r < \infty .$ Let  $s$ be an infinite Steinitz  number. The set $$ S  (r,s) \ = \ \Big\{ \ \frac{a}{b} \ s \ \Big| \ a, \ b \ \in \ \mathbb{N}; \ b \ \in \ \Omega  (s), \ a \ \leq \ r   b \ \Big\}$$  is saturated.\end{example}

\begin{example} Let  $s$ be an infinite Steinitz  number and let  $r= u / v$ be a rational number;  $u, v\in \mathbb{N},$   $v\in \Omega \ (s).$ Then the set
$$ S^{+}  (r,s)  =  \Big\{  \frac{a}{b}  s \ \Big| \ a,  b  \in  \mathbb{N}; \ b  \in  \Omega  (s), \ a  <  r  b  \Big\}$$ is saturated. \end{example}

\begin{theorem}\label{Th_II.1.11_new} Every saturated subset of $\mathbb{SN}$ is one of the following sets:
\begin{enumerate}
  \item[$1)$] $\{ 1,2,\ldots, n\},$  $n\in \mathbb{N},$ or $\mathbb{N};$
  \item[$2)$] $S  (\infty, s),$ $s\in \mathbb{SN} \ \diagdown \ \mathbb{N};$
  \item[$3)$] $S  (r,s),$ where $s\in \mathbb{SN} \ \diagdown \ \mathbb{N},$ $r\in [1,\infty);$
  \item[$4)$] $S^{+}  (r,s),$ where $s\in \mathbb{SN} \ \diagdown \ \mathbb{N},$ $r=u/v,$  $u\in \mathbb{N},$  $v\in \Omega  (s).$
\end{enumerate}
\end{theorem}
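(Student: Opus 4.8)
The plan is to run a three-way case analysis on how $S$ lies inside $\mathbb{SN}$, using condition $2)$ as a downward-closure device and condition $3)$ as the mechanism that fills in the ``interior'' of an admissible set, while condition $1)$ pins every element of $S$ down to a rational multiple of one fixed element. (We take $S\neq\emptyset$; the empty set is vacuously saturated but is meant to be excluded, and it is automatically nonempty when $S$ is a spectrum.) The first case, $S\cap\mathbb{N}\neq\emptyset$, is quick: by $1)$ every element of $S$ is then a finite Steinitz number, so $S\subseteq\mathbb{N}$; by $2)$ we get $1\in S$ and that $S$ is closed under divisors; and applying $3)$ with $s=1$ shows that $m\in S$ forces $\{1,\dots,m\}\subseteq S$. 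Hence $S$ is an initial segment of $\mathbb{N}$, i.e. $S=\{1,\dots,n\}$ or $S=\mathbb{N}$, which is case $1)$.

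From now on all elements of $S$ are infinite Steinitz numbers. For $s\in S$ set $N(s)=\{n\in\mathbb{N}:ns\in S\}$; by condition $3)$ this is an initial segment of $\mathbb{N}$ containing $1$. Suppose some $s_0\in S$ has $N(s_0)=\mathbb{N}$. I claim $S=S(\infty,s_0)$. For $S(\infty,s_0)\subseteq S$: each $(a/b)s_0$ with $b\in\Omega(s_0)$ finitely divides $as_0$, and $as_0\in S$ because $a\in N(s_0)=\mathbb{N}$, so $(a/b)s_0\in S$ by $2)$. Conversely, if $t\in S$ then by $1)$ we may write $t=(a/b)s_0$ with $a,b\in\mathbb{N}$ coprime, and the requirement that $(a/b)s_0$ be a Steinitz number forces $b\in\Omega(s_0)$, so $t\in S(\infty,s_0)$. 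This is case $2)$.

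The remaining case is that $N(s)$ is finite for every $s\in S$. First observe this forces every $t\in S$ to have all prime exponents finite: if $v_p(t)=\infty$ then $p^kt=t$ for all $k$, whence $N(t)=\mathbb{N}$. Fix $s_0\in S$; since $s_0$ has only finite exponents, each $t\in S$ is determined by the positive rational $\rho(t):=\prod_p p^{\,v_p(t)-v_p(s_0)}$, so $\rho\colon S\to\mathbb{Q}_{>0}$ is injective; put $R=\rho(S)$, with $1\in R$, and note that writing $\rho(t)=a/b$ in lowest terms one has $b\in\Omega(s_0)$. Set $r:=\sup R\in[1,\infty]$. The key step is a fill-in lemma: if $q=a/b$ with $b\in\Omega(s_0)$ and there is some $q'=c/d\in R$ (lowest terms, so $d\in\Omega(s_0)$) with $q<q'$, then $qs_0\in S$. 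To prove it, let $L=\mathrm{lcm}(b,d)\in\Omega(s_0)$ and $z=s_0/L$, so $z\in S$ by $2)$; then $q's_0=c'z$ and $qs_0=a'z$ with $c'=c(L/d)$ and $a'=a(L/b)$ natural numbers, and $a'/c'=q/q'<1$, so $a'<c'$; since $c'z=q's_0\in S$ we get $c'\in N(z)$, hence $a'\in N(z)$, i.e. $qs_0=a'z\in S$.

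Applying the fill-in lemma to an arbitrary $q$ whose reduced denominator lies in $\Omega(s_0)$: if $r=\infty$ there is always a larger $q'\in R$, so $qs_0\in S$ for all such $q$, i.e. $S=S(\infty,s_0)$; but in that set $N(t)=\mathbb{N}$ for all $t$, contradicting the present case, so $r<\infty$. If $r\in R$, the fill-in lemma (together with the inclusion $S\subseteq S(r,s_0)$ coming from $1)$) yields $S=S(r,s_0)$, which is case $3)$; if $r\notin R$ it yields $S=\{(a/b)s_0:\ b\in\Omega(s_0),\ a/b<r\}$, which is $S^{+}(r,s_0)$ (case $4)$) precisely when $r=u/v$ with $v\in\Omega(s_0)$, and otherwise --- when $r$ is irrational or its reduced denominator does not divide $s_0$ --- coincides with $S(r,s_0)$ again (case $3)$), since then no admissible ratio can equal $r$. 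I expect the real work to be in getting the fill-in lemma exactly right, namely the choice $z=s_0/\mathrm{lcm}(b,d)$ and the strict inequality $a'<c'$ that lets condition $3)$ bite, together with the closing bookkeeping that matches the produced set against exactly one of the four families --- in particular separating cases $3)$ and $4)$ by whether $\sup R$ is attained and whether its denominator divides the base Steinitz number.
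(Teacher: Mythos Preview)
Your proof is correct and follows the same three-way case split as the paper (first $S\subseteq\mathbb N$, then the ``infinite type'' case where multiples of some element are unbounded, then the ``finite type'' case), but your handling of the finite-type case is organized differently. The paper introduces $r_s(b)=\max\{i:i\cdot(s/b)\in S\}$, proves a chain of inequalities relating the $r_s(b)$ for $b\mid c$, establishes the limit $r=\lim_{b}r_s(b)/b$ (Lemma~\ref{Lem_II.1.4_new}), and then pins down each $r_s(b)$ as either $[rb]$ or $rb-1$ via a separate dichotomy lemma (Lemma~\ref{Lem_II.1.6_new}) before reading off $S$ in Lemma~\ref{Lem_II.1.9_new}. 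You instead make explicit the observation---left implicit in the paper---that finite type forces every prime exponent of every $t\in S$ to be finite, pass directly to the ratio set $R=\rho(S)\subset\mathbb Q_{>0}$, set $r=\sup R$, and use a single fill-in lemma (reduce to a common $z=s_0/\mathrm{lcm}(b,d)$ and apply condition~3)) to show that every admissible ratio strictly below the sup already lies in $R$. Your $r$ agrees with the paper's limit since $\sup_b r_{s_0}(b)/b=\sup R$, so both routes identify the same invariant; yours bypasses the inequality bookkeeping and the separate $rb$ versus $rb-1$ analysis, while the paper's route yields the exact value of $r_s(b)$ for every $b$ along the way.
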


\begin{remark} The real number $r$ above is the inverse of the density invariant of Dixmier--Baranov.
\end{remark}

\begin{theorem}\label{Th_II.2.1_new} \begin{enumerate}
                                    \item[$(1)$] For any saturated subset $S\subseteq \mathbb{SN}$ there exists a countable--dimensional locally matrix algebra $A$ such that $\emph{\text{Spec}}  (A)=S.$
                                    \item[$(2)$] If $A,$ $B$ are countable--dimensional locally matrix algebras and  $\emph{\text{Spec}}  (A)  =  \emph{\text{Spec}}  (B)$ then $ A  \cong  B.$
                                  \end{enumerate}
\end{theorem}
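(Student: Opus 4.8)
\emph{Proof plan.} For part~(1) I would invoke Theorem~\ref{Th_II.1.11_new} and build a model for each of the four types of saturated set. For $S=\{1,\dots,n\}$ take $A=M_n(F)$; for $S=\mathbb{N}$ take $A=M_\infty(F)$, the non-unital countable-dimensional algebra of $\mathbb{N}\times\mathbb{N}$ matrices with finitely many nonzero entries; in both cases a nonzero idempotent $e$ has finite rank $\rho$ with $eAe\cong M_\rho(F)$, and the spectrum is read off at once. For $S=S(\infty,s)$ with $s$ infinite, take $A=M_\infty(F)\otimes_F U$, where $U$ is the Glimm-unique countable-dimensional unital locally matrix algebra of Steinitz number $s$: idempotents $\mathrm{diag}(f,\dots,f,0,\dots)$ with $f$ an idempotent of $U$ realize every number $(a/b)s$ ($a\in\mathbb{N}$, $b\in\Omega(s)$), so $S(\infty,s)\subseteq\text{Spec}(A)$; since $\text{Spec}(A)$ is saturated (Theorem~\ref{Th1_Spec}) with every element a rational multiple of $s$, Theorem~\ref{Th_II.1.11_new} forces equality. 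For $S=S(r,s)$ or $S^{+}(r,s)$ with $s$ infinite I would use a Dixmier-type direct limit $A=\varinjlim M_{m_i}(F)$ with connecting maps $x\mapsto\mathrm{diag}(x,\dots,x,0_{z_i})$ ($\mu_i$ copies of $x$, followed by a zero block of size $z_i\ge0$), choosing the $\mu_i$ so that $b_i:=\mu_1\cdots\mu_{i-1}$ runs cofinally through $\Omega(s)$ and the $z_i$ so that $m_i/b_i$ increases to $r$ --- attaining $r$ precisely when $S=S(r,s)$ with $r=u/v$ rational (switch off the zero blocks once $m_i/b_i=u/v$, having first arranged $v\mid b_i$) and staying strictly below $r$ otherwise. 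A direct computation gives $\text{Spec}(A)=\bigcup_i\{(\rho/b_i)s:1\le\rho\le m_i\}$, which by cofinality of $(b_i)$ equals $S(r,s)$, resp.\ $S^{+}(r,s)$.

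For part~(2) the key tool is an amalgamation lemma: if $C\cong M_c(F)$ and $C'\cong M_{c'}(F)$ are matrix subalgebras of a countable-dimensional locally matrix algebra $A$ with a common unit $e$, then $c,c'\mid\mathbf{st}(eAe)$, hence $\mathrm{lcm}(c,c')\mid\mathbf{st}(eAe)$; writing $eAe=\bigcup_kM_{\rho_k}(F)$ (ascending, unit $e$, $\mathrm{lcm}_k\rho_k=\mathbf{st}(eAe)$) and choosing $k$ large enough both to contain $C\cup C'$ and to satisfy $\mathrm{lcm}(c,c')\mid\rho_k$ --- possible since $\mathrm{lcm}(c,c')$ has only finitely many prime-power divisors, each dividing $\mathbf{st}(eAe)$ --- one obtains a common matrix over-algebra $M_{\rho_k}(F)$ with unit $e$ whose size $\rho_k$ may moreover be taken divisible by any prescribed divisor of $\mathbf{st}(eAe)$. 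With this in hand I would run a back-and-forth: writing $A=\bigcup_iA_i$, $B=\bigcup_jB_j$ as ascending chains of matrix subalgebras with full union, construct ascending matrix subalgebras $C_1\subseteq C_2\subseteq\cdots\subseteq A$ and $D_1\subseteq D_2\subseteq\cdots\subseteq B$ with compatible isomorphisms $\varphi_n\colon C_n\to D_n$ such that $A_n\subseteq C_{2n}$ and $B_n\subseteq D_{2n+1}$; then $\bigcup_n\varphi_n$ is an isomorphism $A\to B$, and together with part~(1) this re-proves the Dixmier--Baranov classification.

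The extension step is where the hypothesis $\text{Spec}(A)=\text{Spec}(B)$ is used. Given $\varphi_n\colon C_n\to D_n$ and a matrix subalgebra $\widetilde C\subseteq A$ to be absorbed, first amalgamate $C_n$ with $\widetilde C$ inside a corner $\widetilde eA\widetilde e$ (where $\widetilde e\ge1_{C_n}$ is the unit of a sufficiently large $A_m$) to obtain $C_{n+1}\cong M_{c_{n+1}}(F)$ with $C_n\subseteq C_{n+1}$ and with the rank of $1_{C_n}$ inside $C_{n+1}$ equal to a prescribed multiple of $c_n$; then one must produce $D_{n+1}\cong M_{c_{n+1}}(F)$ in $B$, above $D_n$, with the rank of $1_{D_n}$ inside $D_{n+1}$ matching and $\varphi_n$ extending. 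This is feasible because the corner Steinitz numbers arising on the $B$-side lie in $\text{Spec}(B)=\text{Spec}(A)$, which also contains the one governing $C_{n+1}$, after which saturatedness supplies the remaining freedom: axiom~$2)$ lets one pass to smaller corners and absorb finite divisors, and axiom~$3)$ interpolates the intermediate multiplicities needed to split $D_n\hookrightarrow D_{n+1}$ the same way $C_n\hookrightarrow C_{n+1}$ was split.

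The step I expect to be the main obstacle is exactly this matching in the back-and-forth. Since $A$ and $B$ need not be unital, one must reproduce on the $B$-side not just the sequence of sizes $c_n$ but the entire pattern of embedding multiplicities and unit-ranks (equivalently, zero-block sizes), and the only available input is the equality $\text{Spec}(A)=\text{Spec}(B)$ of subsets of $\mathbb{SN}$; arranging the quantifiers in the amalgamation and extension lemmas so that precisely the three saturatedness axioms do the work --- together with checking, in part~(1), the boundary behaviour of the Dixmier construction (the $r$-attained-or-not dichotomy, and that $S^{+}(r,s)$ genuinely requires $v\in\Omega(s)$) --- is the technical heart of the argument.
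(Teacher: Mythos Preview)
Your Part~(1) is correct and close to the paper's argument. For $\{1,\dots,n\}$, $\mathbb{N}$, and $S(\infty,s)$ you and the paper build the same algebras (the paper writes $M_\infty(A)$ for what you call $M_\infty(F)\otimes U$). For $S(r,s)$ and $S^+(r,s)$ the paper realises the algebra as a union of corners $A_i=M_{r_s(b_i)}(A_{s/b_i})$, where $A_{s/b_i}$ is the Glimm algebra of Steinitz number $s/b_i$; your Dixmier-type limit of finite matrix algebras is an equivalent presentation, and your spectrum computation $\text{Spec}(A)=\bigcup_i\{(\rho/b_i)s:1\le\rho\le m_i\}$ is valid.

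Part~(2) is where your plan diverges from the paper and where there is a genuine gap. The paper does \emph{not} run a back-and-forth on finite matrix subalgebras; it works one level up, with \emph{corners}. It constructs ascending chains of corners $A_i\subset A$ and $B_i\subset B$ with $\mathbf{st}(A_i)=\mathbf{st}(B_i)$ for every $i$ (Lemma~\ref{Lem_II.2.7_new} supplies the inductive step), invokes Glimm's theorem to get $A_i\cong B_i$, and then proves a sequence of extension lemmas (Lemmas~\ref{Lem_II.2.3_new}--\ref{Lem_II.2.6_new}, via Skolem--Noether and Wedderburn) showing that every isomorphism $A_i\to B_i$ extends to $A_{i+1}\to B_{i+1}$.

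Your scheme matches only the isomorphism type $C_n\cong D_n\cong M_{c_n}(F)$, not the ambient corner Steinitz numbers $s_n:=\mathbf{st}(1_{C_n}A\,1_{C_n})$ and $t_n:=\mathbf{st}(1_{D_n}B\,1_{D_n})$. At the extension step you build $C_{n+1}\supset C_n$ with $1_{C_n}$ of rank $\rho$ in $C_{n+1}\cong M_{c_{n+1}}(F)$; to mirror this on the $B$-side you need an idempotent $f_{n+1}\ge f_n:=1_{D_n}$ with $\mathbf{st}(f_{n+1}Bf_{n+1})=(c_{n+1}/\rho)\,t_n$. But $c_{n+1}/\rho=s_{n+1}/s_n$, so what you need is $(s_{n+1}/s_n)\,t_n\in\text{Spec}(B)$, and the saturatedness axioms do \emph{not} give this when $s_n\neq t_n$. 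Concretely, take $\text{Spec}(A)=\text{Spec}(B)=S(3/2,s)$ with $2\in\Omega(s)$: if at some stage $s_n=s/2$ while $t_n=(3/2)s$ (both legitimate members of the spectrum, and nothing in your induction prevents this), then $(s_{n+1}/s_n)\,t_n=3s_{n+1}$, which lies in $S(3/2,s)$ only for $s_{n+1}\le s/2$; yet absorbing a large enough $\widetilde C\subset A$ forces $s_{n+1}>s/2$, and the step fails. Axioms~$2)$ and~$3)$ let you divide and interpolate, but they do not let you multiply an element of a finite-type saturated set by an arbitrary rational $>1$ and remain in the set.

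The repair is to carry the extra invariant $s_n=t_n$ through the induction; but establishing and propagating that invariant is exactly Lemma~\ref{Lem_II.2.7_new}, and once you have it the finite matrix subalgebras $C_n,D_n$ are superfluous: Glimm's theorem already gives $e_nAe_n\cong f_nBf_n$, and the extension lemmas finish the job. In other words, the missing idea in your Part~(2) is to match corners by Steinitz number rather than matrix subalgebras by size.
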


\begin{remark} The part $(2)$ of Theorem \emph{\ref{Th_II.2.1_new}} is a new proof of Dixmier--Baranov Theorem.
\end{remark}

Which spectra above correspond to unital algebras?

\begin{theorem}\label{Th4_Spec} A locally matrix algebra $A$ is unital if and only if  $\emph{\text{Spec}}  (A) =\{1,2,\ldots,n\},$ where $n\in \mathbb{N},$ or $\emph{\text{Spec}}  (A) = S  (r,s),$ where  $s\in \mathbb{SN} \ \diagdown \ \mathbb{N},$  $r=u/v,$  $u,v\in \mathbb{N},$   $v\in \Omega  (s).$
\end{theorem}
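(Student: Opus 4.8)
The plan is to characterize when $A$ is unital in terms of its spectrum, using the structure theorems already established. First I would prove the forward direction. Suppose $A$ is unital with unit $1$. Then $1$ is itself a nonzero idempotent, so $s_0 := \mathbf{st}(1 \cdot A \cdot 1) = \mathbf{st}(A) \in \text{Spec}(A)$. I claim $s_0$ is the maximum element of $\text{Spec}(A)$: for any nonzero idempotent $e \in A$, the subalgebra $eAe$ embeds unitally into $A$ in a way compatible with the local matrix structure, and using the definition of $\mathbf{st}$ via the set $D(\cdot)$ together with the fact that any $M_n(F) \subseteq eAe$ sits inside some $M_m(F) \subseteq A$ containing $1$, one gets $\mathbf{st}(eAe) \mid \mathbf{st}(A)$. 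Hence $\text{Spec}(A)$ has a largest element $s_0$. Consulting the list in Theorem \ref{Th_II.1.11_new}, the sets $\mathbb{N}$, $S(\infty,s)$, and $S(r,s)$ for irrational... more precisely the ones without a maximum are exactly $\mathbb{N}$, $S(\infty,s)$, and $S^{+}(r,s)$; while $\{1,\ldots,n\}$ has maximum $n$ and $S(r,s)$ has maximum element realized when $a = rb$ is achievable, i.e. when $r = u/v$ with $v \in \Omega(s)$ (taking $b = v$, $a = u$ gives the top element $\tfrac{u}{v}s$). So the existence of a maximum forces $\text{Spec}(A)$ to be of the form $\{1,\ldots,n\}$ or $S(r,s)$ with $r = u/v$, $v \in \Omega(s)$, which is the asserted conclusion. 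I should double-check that $S(\infty,s)$ and $S^{+}(r,s)$ genuinely have no maximum: in $S(\infty,s)$ the element $\tfrac{a}{b}s$ with $b$ fixed and $a \to \infty$ has no upper bound among these (they are pairwise rationally connected but not comparable once $a$ grows), and in $S^{+}(r,s)$ the strict inequality $a < rb$ means one can always approach but never attain the bound.

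For the converse, I must show each such spectrum forces unitality. By Theorem \ref{Th_II.2.1_new}(1) there is a countable-dimensional $A$ with $\text{Spec}(A) = S$; by part (2) it is unique up to isomorphism, so it suffices to exhibit one unital algebra realizing each $S$ on the list — but I also need the statement for algebras of arbitrary dimension, so the cleaner route is the intrinsic one: show directly that if $\text{Spec}(A)$ contains a maximum element $s_0$, then $A$ is unital. Concretely, pick an idempotent $e_0$ with $\mathbf{st}(e_0 A e_0) = s_0$. I claim $e_0$ is a unit for $A$, equivalently $e_0 A e_0 = A$, equivalently $e_0$ is a "full" idempotent. If not, then $A$ properly contains $e_0 A e_0$, and by the locally matrix property there is a matrix subalgebra $M_n(F) \subseteq A$ with $e_0 \in M_n(F)$ but $e_0 \neq 1_{M_n(F)}$ (we can always enlarge a finite set containing $e_0$ and an element outside $e_0Ae_0$ into such an $M_n(F)$, and $e_0$ must be a non-identity idempotent there since it does not act as identity on all of $A$). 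Then $1_{M_n(F)}$ is an idempotent of $A$ with $\mathbf{st}\big(1_{M_n(F)} A \, 1_{M_n(F)}\big)$ strictly larger than $s_0$ — because $e_0 A e_0 \subsetneq 1_{M_n} A 1_{M_n}$ and the ranks of approximating matrix subalgebras strictly increase — contradicting maximality of $s_0$. Hence $e_0$ is the unit. Conversely if $\text{Spec}(A)$ has no maximum, the same argument run backwards shows no idempotent can be a unit, so $A$ is non-unital; this is the content already extracted from Theorem \ref{Th_II.1.11_new} in the forward direction.

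The main obstacle I anticipate is making rigorous the claim that $\mathbf{st}(eAe) \mid \mathbf{st}(fAf)$ whenever $e = ef = fe$ (i.e. $e \leq f$ as idempotents) and, more delicately, that this divisibility is \emph{strict} when $eAe \subsetneq fAf$ — one must rule out the degenerate possibility that enlarging the idempotent adds dimension without increasing the Steinitz number. This requires a careful argument with the sets $D(eAe) \subseteq D(fAf)$ and the observation that if $e < f$ in a matrix algebra $M_n(F) \subseteq A$ then $\mathrm{rank}(e) < \mathrm{rank}(f) \leq n$, combined with cofinality of the matrix subalgebras; I expect one can lift this from the tools used to prove Theorem \ref{Th1_Spec}. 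A secondary point needing care is the precise correspondence between "$S(r,s)$ attains its maximum" and "$r = u/v$ with $v \in \Omega(s)$": one checks that $\max\{a/b : b \in \Omega(s),\ a \leq rb\} \cdot s$ is attained iff there is $b \in \Omega(s)$ with $rb \in \mathbb{N}$, which for $r = u/v$ in lowest terms means $v \mid b$ for some $b \in \Omega(s)$, i.e. $v \in \Omega(s)$; this is a short but necessary number-theoretic verification.
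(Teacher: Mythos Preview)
Your strategy coincides with the paper's: reduce unitality of $A$ to the existence of a maximal idempotent, translate this into $\text{Spec}(A)$ possessing a largest element, then read off from Theorem~\ref{Th_II.1.11_new} which saturated sets have a maximum. The paper does exactly this in three sentences and asserts the middle equivalence without argument; you expand on it and rightly flag the passage from the idempotent order to an order on Steinitz numbers as the crux.

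Your specific justification for that passage, however, is incorrect. You argue that for $e\le f$ one has $D(eAe)\subseteq D(fAf)$ and hence $\mathbf{st}(eAe)\mid\mathbf{st}(fAf)$, on the grounds that a unital copy of $M_n(\mathbb{F})$ inside $eAe$ sits inside some $M_m(\mathbb{F})\subseteq fAf$ containing $f$. This fails. Take $A$ unital with $\mathbf{st}(A)=2^{\infty}$ and let $e$ be a rank-$3$ idempotent in $M_4(\mathbb{F})\subset A$. Then $eAe\supset M_3(\mathbb{F})$ with identity $e$, so $3\in D(eAe)$; but $3\notin D(A)=\{2^k:k\ge 0\}$, so $D(eAe)\not\subseteq D(A)$. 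Correspondingly $\mathbf{st}(eAe)=\tfrac{3}{4}\cdot 2^{\infty}=3\cdot 2^{\infty}$, which does \emph{not} divide $\mathbf{st}(A)=2^{\infty}$ --- in fact it sits strictly above $2^{\infty}$ in the finite-divisibility order --- so $\mathbf{st}(A)$ is not the maximum of $\text{Spec}(A)$ under divisibility and your anticipated ``strict increase'' cannot be repaired along these lines. The correct relation (from \cite{Morita_BO}, invoked in the proof of Theorem~\ref{Th1_Spec}) is $\mathbf{st}(eAe)=r(e)\cdot\mathbf{st}(fAf)$ with the relative range $r(e)\in(0,1]$; when the Steinitz number carries an infinite prime exponent, multiplication by a rational $<1$ need not produce a divisor, so any rigorous version of the argument must be phrased in terms of relative ranges rather than divisibility of Steinitz numbers.
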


\begin{proof}[Proof of Theorem $\ref{Th1_Spec}$] In what follows, we assume that $A$ is a locally matrix $\mathbb{F}$--algebra.  Recall the partial order on the set of all idempotents of $A:$ for idempotents $e,$ $f \in A$ we define  $e  \geq  f $  if $f   \in  e\,A\,e.$

We claim that for arbitrary idempotents $e_1,$ $e_2 \in A$ there exists an idempotent  $e_3 \in A$ such that  $e_1\leq e_3,$ $e_2\leq e_3.$ Indeed, there exists a subalgebra  $A'\subset A$ such that  $e_1, $ $ e_2  \in  A'$  and $ A'\cong M_n(\mathbb{F}),$ $ n\geq 1.$ Let  $e_3$ be the identity element of the subalgebra $A'.$ Then $ e_1  \leq  e_3,$ $ e_2  \leq  e_3.$

Now suppose that the locally matrix algebra $A$ is unital. Let $a\in A.$ Choose a subalgebra $A'\subset A$ such that $1\in A',$ $a\in A'$ and $A' \cong M_n(\mathbb{F}),$ $n\geq 1.$ Let $r$ be the range of the matrix $a$ in $A'.$ Let $$ r(a) \ = \ \frac{r}{n}, \quad 0\leq r(a) \leq 1.$$ V.M.~Kurochkin \cite{Kurochkin} noticed that the number $r(a)$ does not depend on a choice of a subalgebra $A'.$ We call $r(a)$ the \emph{relative range}  of the element $a.$ In \cite{Morita_BO}, we showed  that if $A$ is a unital locally matrix algebra and $e\in A$ is an idempotent, then $\mathbf{st}(eAe)=r(e)\cdot \mathbf{st}(A).$

Now let $A$ be a not necessarily unital locally  matrix algebra. Let $e_1,$ $e_2\in A$ be idempotents. Choose an idempotent $e_3\in A$ such that $e_1 \leq e_3,$ $e_2 \leq e_3,$ i.e. $e_1, $ $e_2\in e_3 A e_3.$ Let  $q_1,$ $q_2$ be relative ranges of the idempotents $e_1,$ $e_2$ in unital locally  matrix algebra $e_3 A e_3.$ Then  $$\mathbf{st}  (e_1\, A \, e_1) \ = \ q_1 \  \mathbf{st}  (e_3\, A \, e_3),  \ \ \ \ \mathbf{st}  (e_2\, A \, e_2) \ = \ q_2 \  \mathbf{st}  (e_3\, A \, e_3). $$ This implies that the Steinitz  numbers $\mathbf{st} (e_1\, A \, e_1),$ $\mathbf{st}  (e_2\, A \, e_2)$ are rationally connected. We have checked the condition 1) from the definition of saturated sets.

Let $0 \neq e\in A$ be an idempotent. Let $s_2  =  \mathbf{st}  (e A e),$ $k \in  \Omega  (s_2)$ and let $s_1  = s_2 / k.$ The unital locally matrix algebra  $eAe$ contains a subalgebra $ e\in M_k(\mathbb{F})\subset eAe.$  Consider the matrix unit  $e_{11}$ of the algebra $M_k(\mathbb{F}).$ The relative range of the idempotent $e_{11}$ in the unital algebra $eAe$ is equal to $1/ k.$ Hence $$\mathbf{st}  (e_{11}\, A\, e_{11}) \ = \ \frac{1}{k} \  \mathbf{st} (e\, A\, e) \ = \ s_1, \quad s_1 \in \text{Spec} (A). $$ We have checked the condition 2).

Now let $n\geq 1.$ Suppose that Steinitz numbers $s$ and $ns$ lie in $\text{Spec}(A).$ It means that there exist idempotents  $e_1,$ $e_2 \in A$ such that $ s  =   \mathbf{st}  (e_1 A  e_1),$ $ n   s =   \mathbf{st}  (e_2 A  e_2).$ There exists a matrix subalgebra $M_k(\mathbb{F})  \subset  A$ that contains $e_1$ and $e_2.$ As above, let $e_3$ be the identity element of the algebra  $M_k(\mathbb{F}).$ Let  $\text{rk}(e_i)$ be the range of the idempotent $e_i$ in the  matrix algebra $M_k(\mathbb{F}).$ We have $$ s  =   \frac{\text{rk}(e_1)}{k}  \cdot \mathbf{st} (e_3\, A \, e_3), \ \ \ \ n\ s \ = \ \frac{\text{rk}(e_2)}{k}  \cdot  \mathbf{st}  (e_3\, A \, e_3),$$ which implies $\text{rk}(e_2)  =  n  \cdot   \text{rk}(e_1).$ In particular, $ n  \cdot   \text{rk}(e_1)  \leq  k.$ Let $1 \leq  i \leq n .$ Consider the idempotent $$ e  =  \text{diag} \ \big( \ \underbrace{1, \ 1, \ \ldots, \ 1,}_{i\cdot \text{rk}(e_1)} \ 0, \ 0, \ \ldots, \ 0\ \big) $$ in the matrix algebra $M_k(\mathbb{F}).$ We have $$\mathbf{st} (e\, A \, e)  =    \frac{i \cdot \text{rk}(e_1)}{k}  \cdot  \mathbf{st}   (e_3\, A \, e_3)  = i \ \cdot \ \mathbf{st}   (e_1\, A \, e_1) =  i \ s.$$ We showed that $is\in \text{Spec}  (A).$ Hence $\text{Spec}  (A)$ is a saturated subset of $\mathbb{SN}.$ It completes the proof of Theorem \ref{Th1_Spec}. \end{proof}

\section{Classification of saturated subsets of $\mathbb{SN}$}

Our aim in this section is to classify all saturated subsets of $\mathbb{SN}.$ We remark that if at least one Steinitz  number from a saturated set $S$ is infinite then by the condition $1)$ all Steinitz  numbers from $S$ are infinite.

Let $S$ be a saturated subset of $\mathbb{SN}.$ For a Steinitz  number $s\in S$ and for a natural  number  $b\in \Omega  (s)$ let $$ r_s(b) \ = \ \max \ \Big\{ \ i \ \geq \ 1 \ \Big| \ i \ \cdot \ \frac{s}{b} \ \in \ S \ \Big\}.$$

\begin{lemma}\label{Lem_II.1.2_new}   If there exists a Steinitz  number $s_0 \in S$ and a natural  number $b_0 \in \Omega \ (s_0)$ such that
$ r_{s_0}(b_0) \ = \ \infty$ then for any $s\in S$ and any $b\in \Omega \ (s)$  we have
$ r_s(b)= \infty.$ \end{lemma}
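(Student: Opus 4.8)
We are given that $S$ is saturated, $s_0 \in S$, $b_0 \in \Omega(s_0)$, and $r_{s_0}(b_0) = \infty$; this means $i \cdot s_0/b_0 \in S$ for every $i \geq 1$. We must show that for an arbitrary $s \in S$ and arbitrary $b \in \Omega(s)$ we likewise have $r_s(b) = \infty$, i.e. $i \cdot s/b \in S$ for all $i \geq 1$.

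The plan is to first reduce to the case $b=1$, i.e. to show $i s \in S$ for all $i \geq 1$, and then handle general $b$. Let me outline the first reduction. Write $t_0 := s_0/b_0$, so $i t_0 \in S$ for all $i$. Fix $s \in S$. By condition $1)$ of saturation, $s$ and $t_0$ are rationally connected, say $s = (p/q) t_0$ with $\gcd(p,q)=1$; one checks $q \mid t_0$ as Steinitz numbers so that $q \in \Omega(t_0)$, and then $q \in \Omega(it_0)$ for all $i$. The idea is to produce, for each target $i$, a Steinitz number $n t_0 \in S$ with $n$ large enough that $s$ and $is$ both arise from $n t_0$ by finite division, and then invoke condition $3)$. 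Concretely, choose $n = i p q$ (or any sufficiently large multiple): then $s = (p/q) t_0 = (n t_0)/(q^2 i)$ and $is = (n t_0)/(q^2)$ exhibit $s \big|_{fin} n t_0$ and $is \big|_{fin} nt_0$ provided $q^2 i, q^2 \in \Omega(nt_0)$, which holds because $q \mid t_0$. So both $s$ and $ns'$ lie in $S$ where $s' = (nt_0)/(q^2 i) = s$ and $ns' = is$... — more carefully: apply condition $2)$ to get $s \in S$ and $is \in S$ are both "reachable," but to conclude we actually want to run condition $3)$ directly: having $s \in S$ and some multiple $Ks \in S$ for arbitrarily large $K$ forces $js \in S$ for all $1 \le j \le K$, hence for all $j$. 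So it suffices to show: for every $s \in S$ there are arbitrarily large $K$ with $Ks \in S$. This follows from the displayed identities since $(nt_0)/(q^2) = is$ lies in $S$ by condition $2)$ applied to $nt_0 \in S$ (valid as $q^2 \mid nt_0$ once $n$ is a multiple of $q$), and $i$ is unbounded.

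For the general $b \in \Omega(s)$: we now know $js \in S$ for all $j \geq 1$. Since $b \mid s$ as a Steinitz number, $b \in \Omega(js)$ for every $j$, so condition $2)$ gives $(js)/b = j \cdot (s/b) \in S$ for all $j \geq 1$; in particular $i \cdot s/b \in S$ for all $i$, i.e. $r_s(b) = \infty$. The main obstacle is the bookkeeping in the reduction step: one must be careful that the rational number relating $s$ and $t_0$ really has denominator dividing $t_0$ (this uses that both are Steinitz numbers and rational connectedness, so the "denominator primes" already appear with sufficient multiplicity in $t_0$), and that after multiplying $t_0$ by a suitable integer the relevant factors lie in $\Omega(\cdot)$ so that conditions $2)$ and $3)$ genuinely apply. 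Once that is set up, the rest is a direct application of the three saturation axioms.
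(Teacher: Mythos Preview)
Your argument is correct, and the underlying mechanism---multiplying $t_0$ by a suitable integer so that the desired element becomes a finite divisor, then invoking condition~2)---is the same as in the paper. The organization differs: the paper first proves $r_{s_0}(b)=\infty$ for every $b\in\Omega(s_0)$ via common multiples and only then passes to a general $s$, whereas you go straight to an arbitrary $s\in S$, establish $is\in S$ for all $i$ (i.e.\ $r_s(1)=\infty$), and then obtain $r_s(b)=\infty$ for every $b\in\Omega(s)$ in one line from condition~2). Your route avoids the common-multiple bookkeeping and the implicit re-application of the first step that the paper needs at the end.

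Two small points on exposition. First, the opening sentence ``$r_{s_0}(b_0)=\infty$; this means $i\cdot s_0/b_0\in S$ for every $i\ge 1$'' is not literally the definition (which only says the set is unbounded); you need one invocation of condition~3) together with $s_0/b_0\in S$ (from condition~2)) to get \emph{all} $i$---you clearly know this, since you use exactly that argument later. Second, the middle paragraph contains a muddle: the line ``$s'=(nt_0)/(q^2 i)=s$ and $ns'=is$'' is inconsistent (with $s'=s$ and $n=ipq$ one has $ns'=ipq\cdot s$, not $is$), and the subsequent appeal to condition~3) is redundant once your construction already gives $is\in S$ for each fixed $i$ directly via condition~2). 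None of this damages the logic, but you should prune the false start and state the argument once, cleanly.
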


\begin{proof} Let us show at first that  $r_{s_0}(b)  = \infty $ for any $ b  \in  \Omega  (s_0).$ Indeed, there exists a natural  number $c  \in  \Omega  (s_0)$ such that both $b_0$ and $b$ divide $c.$  Then for an arbitrary  $i\geq 1$ we have $$i \ \cdot \ \frac{s_0}{b_0} \ = \ \Big( \ i \  \cdot \ \frac{c}{b_0} \ \Big) \ \cdot \ \frac{s_0}{c} \ \in \ S. $$ This implies that $ r_{s_0}(c)  =  \infty.$ Hence, $$i \ \cdot \ \frac{s_0}{b} \ = \ \Big( \ i \  \cdot \ \frac{c}{b} \ \Big) \ \cdot \ \frac{s_0}{c} \ \in \ S, $$ which proves the claim.

Now choose an  arbitrary Steinitz  number $s\in S.$ By the condition $1),$ the Steinitz  numbers $s$ and $s_0$ are rationally connected, i.e. there exist $a\in \mathbb{N},$ $ b \in  \Omega  (s_0)$ such that $ s  = ( a / b ) \cdot  s_0.$ By condition $2),$  $ s_0 /b   \in  S. $ Choose a natural  number $c  \in  \Omega  (s_0 / b). $ Then $ c  \in   \Omega  (s)$  and $ bc  \in  \Omega  (s_0). $ For an arbitrary $i\geq 1$  we have $i  \cdot s / c  =  i  \cdot  a  \cdot s_0 / (bc)   \in  S $ since $r_{s_0}(bc)  =  \infty . $ This implies  $ r_s(c)   =  \infty $ and completes the proof of the lemma. \end{proof}

If a saturated set satisfies the assumptions of Lemma \ref{Lem_II.1.2_new} then it is referred to as  a set of \emph{infinite type}. Otherwise, we talk about a saturated set of \emph{finite type}.

\begin{lemma}\label{Lem_II.1.3_new} \begin{enumerate}
                                       \item[$1)$] For an arbitrary Steinitz  number $s_0\in \mathbb{SN}$ the set $$S  (\infty,s_0 )  \ :=  \ \Big\{ \ \frac{a}{b} \ \cdot \ s_0 \ \Big| \ a \ \in \ \mathbb{N}, \ b \ \in \ \Omega  (s_0) \ \Big\}$$ is a saturated set of infinite type.
                                       \item[$2)$] If $S$ is a  saturated set of  infinite type, then for an arbitrary Steinitz  number $s\in S$ we have $S=S  (\infty,s ) .$                                      \end{enumerate}
\end{lemma}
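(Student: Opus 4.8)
The plan is to route everything through an explicit description of $S(\infty,s_0)$ and then obtain both parts of the lemma almost formally.

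\emph{Step 1 (the key description).} First I would prove the auxiliary identity
$$ S(\infty, s_0) \ = \ \{\, s \in \mathbb{SN} \ : \ s \ \text{is rationally connected to} \ s_0 \,\}. $$
The inclusion $\subseteq$ is immediate, since $(a/b)s_0$ with $b \in \Omega(s_0)$ differs from $s_0$ by the rational factor $a/b$. For $\supseteq$ one argues prime by prime. Write $v_p$ for the $p$-adic exponent of a Steinitz number and suppose $s = (u/v)\,s_0$ with $u,v \in \mathbb{N}$. For a prime $p$ with $v_p(s_0) = \infty$ we have $v_p(s) = \infty$ and we set $a_p = b_p = 0$; for a prime $p$ with $v_p(s_0) < \infty$ the exponent $v_p(s) = v_p(u) - v_p(v) + v_p(s_0)$ is a finite nonnegative integer and we set $b_p = \max\{v_p(s_0) - v_p(s),\,0\}$ and $a_p = \max\{v_p(s) - v_p(s_0),\,0\}$. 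Then $b := \prod_p p^{b_p}$ divides $s_0$, $a := \prod_p p^{a_p} \in \mathbb{N}$, and $s = (a/b)\,s_0$. The point to check is that only finitely many $a_p, b_p$ are nonzero: this holds because $v_p(u) = v_p(v) = 0$ for all but finitely many $p$, so $v_p(s) = v_p(s_0)$ there, forcing $a_p = b_p = 0$. This is the only genuinely computational moment in the whole argument.

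\emph{Step 2 (part 1)).} Given the identity of Step 1, the three saturation axioms for $S(\infty,s_0)$ are routine. Axiom 1) holds because rational connectedness is an equivalence relation on $\mathbb{SN}$. For axiom 2), if $s_2 \in S(\infty,s_0)$ and $s_1 \mid_{fin} s_2$, then $s_1 = s_2/c$ for some $c \in \Omega(s_2)$, so $s_1$ is rationally connected to $s_2$, hence to $s_0$. For axiom 3), if $s, ns \in S(\infty,s_0)$ then for $1 \le i \le n$ we have $is = (i/n)(ns)$, which is rationally connected to $ns$. Finally, $S(\infty,s_0)$ is of infinite type: taking the element $s_0 \in S(\infty,s_0)$ and $b_0 = 1 \in \Omega(s_0)$, every $i\cdot s_0 = (i/1)\,s_0$ lies in $S(\infty,s_0)$, so $r_{s_0}(1) = \infty$.

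\emph{Step 3 (part 2)).} Let $S$ be saturated of infinite type and fix $s \in S$. For $S \subseteq S(\infty,s)$: any $s' \in S$ is rationally connected to $s$ by axiom 1), hence $s' \in S(\infty,s)$ by Step 1. For $S(\infty,s) \subseteq S$: take $s' = (a/b)\,s$ with $a \in \mathbb{N}$, $b \in \Omega(s)$. Axiom 2) applied to $s/b \mid_{fin} s$ gives $s/b \in S$. Since $S$ has infinite type, Lemma \ref{Lem_II.1.2_new} applies to $s$ and $b$ and yields $r_s(b) = \infty$, i.e.\ $i\cdot(s/b) \in S$ for all $i \ge 1$; with $i = a$ this gives $s' = a\cdot(s/b) \in S$. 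Hence $S = S(\infty,s)$. The main obstacle is the bookkeeping in Step 1; once the description of $S(\infty,s_0)$ is in hand, part 1) is a mechanical check and part 2) is just axioms 1)--2) combined with the already-proved Lemma \ref{Lem_II.1.2_new}, recalling that ``infinite type'' is precisely its hypothesis.
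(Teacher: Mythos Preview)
Your argument is correct, but it is organized differently from the paper's. The paper verifies the three saturation axioms for $S(\infty,s_0)$ by direct manipulation: given $s=(a/b)s_0$ with $\gcd(a,b)=1$ and $c\in\Omega(s)$, it sets $d=\gcd(a,c)$, $a=a'd$, $c=c'd$, and shows $bc'\in\Omega(s_0)$ so that $s/c=(a'/bc')\,s_0$ and $i\cdot(s/c)=(ia'/bc')\,s_0$ lie in $S(\infty,s_0)$. Thus the bookkeeping is spread across the checks of conditions~2) and~3). You instead front-load all of it into the single identity
\[
S(\infty,s_0)=\{\,s\in\mathbb{SN}:s\text{ is rationally connected to }s_0\,\},
\]
proved by a prime-by-prime construction of $a$ and $b$; once this is in hand, conditions 1)--3) and the infinite-type property become one-line consequences of the fact that rational connectedness is an equivalence relation stable under multiplication by positive integers and by reciprocals of divisors. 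For part~2) the two proofs coincide: both use axiom~1) for $S\subseteq S(\infty,s)$ and Lemma~\ref{Lem_II.1.2_new} (via axiom~2) to get $s/b\in S$) for the reverse inclusion. Your route is a bit more conceptual and makes the symmetry in $s_0$ (the fact that $S(\infty,s_0)=S(\infty,s')$ for any $s'$ in the set) transparent, at the cost of the explicit $p$-adic computation in Step~1; the paper's route avoids introducing valuations but repeats a gcd reduction twice.
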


\begin{proof} We have to show that the set  $S (\infty,s_0 ) $ satisfies the conditions $1),$ $2),$ $3).$ The condition $1)$ is obvious. Let $ s  = ( a / b) \cdot  s_0,$ $b  \in  \Omega  (s_0).$ Without loss of generality, we assume that  $a$ and $b$ are coprime. Let $c\in \Omega (s)$ and let  $ d  =  \text{gcd} (c,a)$ be the greatest common divisor of $a$ and $c, $ $a = a\,'  d, $ $ c  =  c\,'  d,$ the numbers $a\,',$ $c\,'$ are coprime. Then $ a \cdot s_0 / (bc)  =  a\,' \cdot s_0 / (bc\,'), $ which implies that $d  c\,'  \in  \Omega  (s_0).$ Hence $$ \frac{s}{c}  \ = \ \frac{a}{bc} \ \cdot \ s_0 \ = \  \frac{a\,'}{bc\,'} \ \cdot \ s_0 \ \in \ S    (\infty,s_0 ) .$$ We have checked the condition $2).$

Let us check the  condition $3).$ Choose $ s  = ( a / b)  \cdot  s_0  \in  S  (\infty,s_0 ),$ $ b  \in  \Omega  (s_0). $ Let $c \in \Omega   (s).$ We need to check that for any $i \geq 1 $ $$ i  \ \cdot \ \frac{s}{c} \ = \  \frac{ia}{bc} \  \cdot \ s_0 \ \in \ S   (\infty,s_0 ). $$ Let $ a / (bc )  =  a\,'  / b\,', $ where the natural numbers $ a\,',$ $  b\,' $ are coprime. Since  $$ \frac{a}{bc} \ \cdot \ s_0 \ = \ \frac{s}{c}\in \mathbb{SN}$$ it follows that  $b\,' \in \Omega   (s_0).$ Hence, $ i  \cdot (a\,' / b\,')  \cdot  s_0  \in   S   (\infty,s_0 ),$ which implies that $S  (\infty,s_0 )$ satisfies the condition $3)$ and therefore is saturated.

Let $S$ be a  saturated subset of $\mathbb{SN}$ of infinite type. Choose  $s_0 \in S.$ Our aim is to show that  $ S  =  S  (\infty,s_0 ). $ Since the subset $S$ is of infinite type it follows that  $r_s(b)=\infty$ for any $s\in S,$ $b\in \Omega(s).$ In particular, $$ S   (\infty,s_0 ) \ = \ \Big\{ \ \frac{a}{b} \ \cdot \ s_0 \ \Big| \ s \ \in \Omega   (s_0) \ \Big\}  \ \subseteq \ S.$$ An arbitrary Steinitz  number $s\in S$ is rationally connected  to $s_0,$ hence there exist   $a,$ $b \in \mathbb{N}$ such that  $ s  =   (a / b ) \cdot  s_0 .$ Without loss of generality, we assume that  $a$ and $b$ are coprime, which implies $b \in \Omega  (s_0).$ We proved that  $s \in S  (\infty,s_0 ).$ \end{proof}

Now let  $S\subset \mathbb{SN}$ be a saturated subset of finite type, that is, for any   $s \in S,$ $d \in \Omega \ (s)$  we have $$r_s(b) \ = \ \max \ \Big\{ \ i \ \in \ \mathbb{N} \ \big| \ i \ \cdot \ \frac{s}{b} \ \in \ S \ \Big\} \ < \ \infty.$$ By the condition $3),$  $$\Big\{ \ i \ \in \ \mathbb{N} \ \big| \ i \ \cdot \ \frac{s}{b} \ \in \ S \ \Big\} \ = \ \big[ \ 1 \ , \ r_s(b) \ \big]. $$ Since $ b  \cdot  (s / b ) \in  S $  it follows that $ b  \leq  r_s(b).$ Choose a Steinitz  number $ s  \in  S$ and  two natural numbers $ b,$ $ c   \in  \Omega  (s)$ such that $ b$ divides $c.$ If $ i  \cdot  (s / b )  \in  S $ then $( ic / b)  \cdot  ( s/ c)  \in  S.$ Hence $ r_s(b)  \cdot  ( c / b )  \leq  r_s(c).$ In other words,
\begin{equation}\label{Big_frac_1} \frac{r_s(b)}{b} \ \leq \ \frac{r_s(c)}{c}.
\end{equation}
Let  $i \in \mathbb{N},$ $s / c  \in  S$ and let $k$ be a maximal nonnegative integer such that  $  k  \cdot ( c / b ) \leq  i.$ By the condition $3),$ $ k  \cdot  (c / b)  \cdot (s / c ) \in  S, $ hence $ k  \cdot  (s / b)   \in  S.$  So, $ k  \leq r_s(b).$ We proved that
\begin{equation}\label{Big_frac_2}
\Big[ \ \frac{r_s(c)}{c \, / \, b} \ \Big] \ \leq \ r_s(b) .
\end{equation}
The inequalities  (\ref{Big_frac_1}), (\ref{Big_frac_2})  imply $$\Big[ \ \frac{r_s(c)}{c \, / \, b} \ \Big] \ \leq \ r_s(b) \ \leq \  \frac{r_s(c)}{c \, / \, b} .$$  Hence
\begin{equation}\label{Big_frac_3} r_s(b) \ = \ \Big[ \ \frac{r_s(c)}{c \, / \, b} \ \Big]  .\end{equation}
In particular, $$ \frac{r_s(c)}{c \, / \, b} \ - \ 1 \ < \ r_s(b), \quad  \frac{r_s(c)}{c \, / \, b} \ < \ r_s(b) \ + \ 1.$$ Dividing by $b,$ we get
 \begin{equation}\label{Big_frac_4} \frac{r_s(b)}{b} \ \leq \ \frac{r_s(c)}{c} \ < \ \frac{r_s(b)}{b} \ + \ \frac{1}{b}.
 \end{equation}

\begin{lemma}\label{Lem_II.1.4_new}  Let $ S \subset \mathbb{SN}$ be a saturated subset of finite type and let $s\in S$ be an infinite Steinitz  number. Then there exists a limit $$r_S(s) \ = \ \lim_{\substack{ \\ b \ \in \ \Omega  (s)\\b \,\rightarrow \,\infty}}  \ \frac{r_s(b)}{b} \ ,  \quad 1 \ \leq \ r_S(s) \ < \infty.$$
\end{lemma}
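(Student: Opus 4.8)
The plan is to establish convergence of $r_s(b)/b$ as $b$ ranges over $\Omega(s)$ (a directed set under divisibility) by showing the net is monotone nondecreasing and bounded above. Monotonicity is already in hand: inequality (\ref{Big_frac_1}) says that if $b \mid c$ with $b, c \in \Omega(s)$ then $r_s(b)/b \leq r_s(c)/c$, so the net $(r_s(b)/b)_{b \in \Omega(s)}$ is monotone along the divisibility order. Since $\Omega(s)$ is a directed set (any two divisors $b, c$ of $s$ both divide their least common multiple, which still lies in $\Omega(s)$ because $s$ is a Steinitz number), a bounded monotone net over it converges to its supremum. So the whole proof reduces to producing a finite upper bound for $r_s(b)/b$ that is uniform in $b$.

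For the upper bound I would use condition $1)$ of saturatedness together with the hypothesis that $S$ is of finite type. Fix any $b_0 \in \Omega(s)$; then $r_s(b_0) < \infty$ by the finite-type assumption, so $r_s(b_0) \cdot (s/b_0) \in S$ while $(r_s(b_0)+1)\cdot (s/b_0) \notin S$. I want to show $r_s(b)/b \leq r_s(b_0)/b_0 + 1$ for all $b \in \Omega(s)$, or something of that shape. Passing to a common multiple $c$ of $b$ and $b_0$ and applying (\ref{Big_frac_4}) twice (once for the pair $b_0 \mid c$, once for $b \mid c$) gives
\begin{equation*}
\frac{r_s(b)}{b} \ \leq \ \frac{r_s(c)}{c} \ < \ \frac{r_s(b_0)}{b_0} \ + \ \frac{1}{b_0},
\end{equation*}
where the first inequality is monotonicity and the second is the right-hand estimate of (\ref{Big_frac_4}) applied to $b_0 \mid c$. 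Hence $r_s(b)/b$ is bounded above by the constant $r_s(b_0)/b_0 + 1/b_0$, independent of $b$, so the supremum $r_S(s)$ is finite.

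It remains to check the lower bound $r_S(s) \geq 1$. This follows from the observation recorded just before (\ref{Big_frac_1}): since $b \cdot (s/b) = s \in S$, we have $b \leq r_s(b)$ for every $b \in \Omega(s)$, hence $r_s(b)/b \geq 1$ for all $b$, and therefore the limit (= supremum) is at least $1$. Combining, $1 \leq r_S(s) < \infty$, which is exactly the assertion.

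The only genuinely delicate point is the uniform boundedness, i.e. ruling out that $r_s(b)/b \to \infty$; everything else is bookkeeping with the directed set $\Omega(s)$ and the already-derived inequalities. The key insight is that a single finite value $r_s(b_0)$, which exists precisely because $S$ is of finite type, propagates — via the ``one step of slack'' estimate (\ref{Big_frac_4}) after pushing up to a common multiple — to a global bound $r_s(b_0)/b_0 + 1/b_0$ valid at every $b$. I would also remark that monotonicity guarantees the limit does not depend on how $b \to \infty$ through $\Omega(s)$, so the notation $r_S(s)$ is well defined; this is implicit but worth a sentence for the reader.
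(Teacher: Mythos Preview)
Your argument is essentially the paper's own: the boundedness step (fix $b_0$, pass to a common multiple $c$, apply (\ref{Big_frac_1}) and (\ref{Big_frac_4})) is identical, and the lower bound $r_S(s)\ge 1$ is the same observation $b\le r_s(b)$.

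The one point where you diverge is in how you extract the limit. You invoke convergence of a bounded monotone net over the directed set $(\Omega(s),\,\mid\,)$, which yields the limit along the \emph{divisibility} direction: for every $\varepsilon>0$ there is $b_0$ such that $r_s(b)/b>r-\varepsilon$ for all $b$ \emph{divisible by} $b_0$. The lemma, however, is stated (and used later in the paper) for $b\to\infty$ in \emph{size}, and large $b\in\Omega(s)$ need not be multiples of a fixed $b_0$. Your closing remark that ``monotonicity guarantees the limit does not depend on how $b\to\infty$'' is not a proof of this translation; monotonicity alone does not do it. What closes the gap is one further application of (\ref{Big_frac_4}): given $\varepsilon>0$, choose $b_0$ with $r_s(b_0)/b_0>r-\varepsilon/2$; then for any $b\in\Omega(s)$ with $1/b<\varepsilon/2$, take a common multiple $c$ of $b$ and $b_0$ and use (\ref{Big_frac_1}), (\ref{Big_frac_4}) to get
\[
\frac{r_s(b)}{b} \;>\; \frac{r_s(c)}{c}-\frac{1}{b} \;\ge\; \frac{r_s(b_0)}{b_0}-\frac{1}{b} \;>\; r-\varepsilon .
\]
This is exactly the paper's $\varepsilon$--$N(\varepsilon)$ step, and it is needed to upgrade your net limit to the size limit asserted in the lemma.
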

If the set $S$ is fixed then we denote  $r_S(s)= r(s).$

\begin{remark} The limit $r(s)$ is equal to the inverse of the density invariant of Dixmier--Baranov \emph{(\cite{Baranov2,Diskme})}.
\end{remark}

\begin{proof}[The proof of Lemma $\ref{Lem_II.1.4_new}$] The set $ \{ r_s(b) / b \ | \ b  \in  \Omega  (s) \} $ is  bounded from above. Indeed, choose $b_0 \in \Omega  (s).$ For an arbitrary  $b \in \Omega  (s)$ there exists $c \in \Omega  (s)$ that is a common multiple for  $b_0$ and $b.$ Then by (\ref{Big_frac_1}) and (\ref{Big_frac_4}), $$\frac{r(b)}{b} \ \leq \ \frac{r(c)}{c} \ < \ \frac{r(b_0)}{b_0} \ + \ \frac{1}{b_0} .$$

Let   $$ r \ = \ r(s) \ = \ \sup \ \Big\{ \ \frac{r_s(b)}{b} \ \Big| \ b \ \in \ \Omega  (s) \ \Big\}.$$ Clearly, $ 1\leq r < \infty.$ Choose $\varepsilon > 0.$ Let $ N(\varepsilon)  = [  2r/ \varepsilon ]  +  1.$  We will show that for any  $b\in \Omega \ (s),$ $b\geq N(\varepsilon),$ we have $ r  -  \varepsilon  <   r_s(b) / b. $

Indeed, let $ b  \in  \Omega  (s),$ $ b  \geq N(\varepsilon)  >  2r / \varepsilon. $ Then $ 1/ b  < \varepsilon / (2r) \leq \varepsilon / 2 . $ There exists a natural number $b_0 \in \Omega \ (s)$ such that $ r  -  \varepsilon / 2  < r_s(b_0) / b_0.$  Let $c\in \Omega  (s)$ be a common multiple of  $b_0$ and $b.$ Then  (\ref{Big_frac_4}) implies  $$ \frac{r(b)}{b} \ > \ \frac{r(c)}{c} \ - \ \frac{1}{b} \ \geq \ \frac{r(b_0)}{b_0} \ - \ \frac{1}{b} \ > \ r \ - \ \frac{\varepsilon}{2} \ - \  \frac{\varepsilon}{2} \ = \ r \ - \ \varepsilon.$$ So,  $$r \ = \ \lim_{\substack{ \\ b \ \in \ \Omega  (s)\\b \,\rightarrow \,\infty}}  \ \frac{r_s(b)}{b}  $$ and this completes the proof of the lemma. \end{proof}

\begin{lemma}\label{Lem_II.1.5_new}  Let $s, $ $ s\,' \in S$ be infinite  Steinitz  numbers, $ s\,'  =  (a / b) \cdot s;$ $ a,$ $b  \in  \mathbb{N} ;$ $ b \in  \Omega  (s).$ Then  $r(s\,')  =  (a/ b) \cdot r(s).$
\end{lemma}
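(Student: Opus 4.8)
The plan is to compute $r(s')$ directly from its definition as a limit over divisors of $s'$, expressing everything in terms of the quantities $r_s(\cdot)$ attached to $s$, and then invoking equation (\ref{Big_frac_4}) to control the error. First I would note that since $s' = (a/b)\cdot s$ with $b \in \Omega(s)$, every sufficiently large divisor of $s'$ can be taken in a convenient cofinal form. More precisely, I want to choose divisors $c \in \Omega(s)$ that are multiples of $b$, and relate a divisor of $s'$ to such a $c$: if $c = b\cdot c_1$ then $c_1 a$ (or rather $ac/b$) is a natural number, and because $s'c_1 = (a/b)\cdot s \cdot c_1$ one checks that the relevant multiple of $s'/c_1$ lies in $S$ exactly when the corresponding multiple of $s/c$ does. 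The key identity I expect to establish is that for $c \in \Omega(s)$ a multiple of $b$, writing the divisor of $s'$ appropriately, one has $r_{s'}(\ast) = r_s(c)$ with the index $\ast$ on the $s'$ side equal to $(a/b)\cdot c$ up to the bookkeeping forced by $s' = (a/b)s$; the point is that $(s'/\ast) = (s/c)$ as Steinitz numbers, so the two maximal-multiplicity sets coincide.

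Next I would pass to the limit. Since $r(s) = \lim_{b'\in\Omega(s),\,b'\to\infty} r_s(b')/b'$ exists by Lemma \ref{Lem_II.1.4_new}, and the divisors of $s$ that are multiples of $b$ are cofinal in $\Omega(s)$ (because $b \mid s$ and one can multiply by arbitrarily large further divisors), the limit is also computed along that cofinal subfamily. For such $c = b c_1$, the corresponding divisor on the $s'$ side is $d := (a/b)\cdot c = a c_1$, which ranges over a cofinal family in $\Omega(s')$, and $r_{s'}(d)/d = r_s(c)/(ac_1)$ while $r_s(c)/c = r_s(c)/(bc_1)$. Hence $r_{s'}(d)/d = (b/a)\cdot r_s(c)/c$, and taking $c \to \infty$ gives $r(s') = (b/a)\cdot r(s)$. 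Wait — this is the reciprocal of what is claimed; so I would be careful with the direction of the scaling: if $s' = (a/b)s$ is \emph{larger} (more divisors absorbed), then a fixed band in $S$ occupies a \emph{smaller} relative fraction, so indeed one must double-check whether the claim $r(s') = (a/b)r(s)$ forces $a/b$ to be read the other way, or whether my parametrization of divisors introduces a compensating factor. The safe route is to verify the identity on the nose using (\ref{Big_frac_3}): $r_s(b_1) = [\,r_s(c)/(c/b_1)\,]$, apply it symmetrically for $s$ and $s'$, and let the floor functions wash out in the limit.

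The main obstacle I anticipate is purely the divisibility bookkeeping: making sure that when I write $s' = (a/b)s$ I may assume $\gcd(a,b)=1$ and $b\in\Omega(s)$ (this is condition 1) plus condition 2) as used in Lemma \ref{Lem_II.1.2_new}), and then correctly identifying which natural numbers are legitimate "denominators" $\Omega(s')$ versus $\Omega(s)$ — the subtlety being that multiplying $s$ by $a/b$ changes the $p$-adic valuation at primes dividing $ab$, so a divisor of $s'$ need not be a divisor of $s$ and vice versa. I would handle this by restricting throughout to divisors of the form $c$ with $b \mid c \mid s$ on the $s$-side and their images $(a/b)c \mid s'$ on the $s'$-side; one verifies these images genuinely divide $s'$ and form a cofinal set, after which the computation above, combined with squeezing via (\ref{Big_frac_4}) applied to both $s$ and $s'$, yields $r(s') = (a/b)\,r(s)$ once the orientation is fixed correctly. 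Everything else — existence of the limits, the interval description of the multiplicity sets — is already furnished by the preceding lemmas.
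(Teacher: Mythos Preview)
Your approach is sound and genuinely different from the paper's. You compute $r(s')$ directly by identifying a cofinal family of divisors of $s'$ of the form $d = a c_1$ with $c_1 \in \Omega(s/b)$, observing that $s'/d = s/(bc_1)$ as Steinitz numbers so that $r_{s'}(d) = r_s(bc_1)$, and then passing to the limit. The paper instead reduces to the special case of an integer multiple: writing $as = bs'$, it suffices to show $m\cdot r(ms) = r(s)$ whenever $s, ms \in S$, and this is done by a two-sided squeeze using only divisors $b \in \Omega(s) \subseteq \Omega(ms)$ (so no cofinality bookkeeping is needed). Your route is more conceptual but requires verifying that $\{a c_1 : c_1 \in \Omega(s/b)\}$ is cofinal in $\Omega(s')$; the paper's route is slicker because the inclusion $\Omega(s) \subseteq \Omega(ms)$ is immediate.

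Regarding the sign confusion you flagged: your computation $r(s') = (b/a)\,r(s)$ is correct, and the paper's own proof confirms it --- the argument there establishes $m \cdot r(ms) = r(s)$, i.e.\ $r(ms) = r(s)/m$, and its later use in Lemma~\ref{Lem_II.1.8_new} also applies the factor $b/a$. The displayed formula $r(s') = (a/b)\,r(s)$ in the lemma statement is a typo for $(b/a)$; intuitively, a larger Steinitz number sees the same saturated set with a smaller relative upper bound. So you should trust your calculation rather than trying to manufacture a compensating factor.
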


\begin{proof} It is sufficient to show that if  $s ,$ $ ms  \in S;$ $m \in \mathbb{N},$ then $ m  \cdot  r(m  s)  =   r(s). $

Suppose that $b \in \Omega  (s)$  and $i  \cdot (m  s/ b) \in  S.$ Then $ i  \cdot  m  \cdot  (s/b) \in  S. $ Hence $ r_{ms}(b)  \cdot  m  \leq  r_s(b)$ and, therefore, $r(m  s)  \cdot  m  \leq  r(s).$

On the other hand, if $ i  \cdot (s/b) \in  S $ then $ [ i / m ]   \cdot  m \leq  i$ and, therefore, $ [ i / m ]   \cdot  m  \cdot  (s / b)  \in  S .$ We showed that $$\Big[ \ \frac{r_s(b)}{m} \ \Big] \ \leq \ r_{ms}(b), \ \ \ \ \frac{r_s(b)}{m} \ - \ 1 \ < \ r_{ms}(b), $$ $$  \frac{1}{m} \ \cdot \ \frac{r_s(b)}{b} \ - \ \frac{1}{b} \ < \ \frac{r_{ms}(b)}{b} .$$ Assuming  $b\rightarrow \infty$ we get  $ (1/ m)  \cdot  r(s)  \leq  r(m s),$ which completes the proof of the lemma. \end{proof}

 In the inequality  (\ref{Big_frac_4}), let $c \rightarrow \infty.$ Then $$\frac{r_s(b)}{b} \ \leq \ r(s) \ \leq \ \frac{r_s(b)}{b} \ + \ \frac{1}{b}, \quad r_s(b) \ \leq \ r(s) \ b \ \leq \ r_s(b) \ + \ 1.$$ If the number $r(s)$ is irrational then  $r_s(b)  =  [  r(s)  b  ] $ for all $ b  \in  \Omega  (s).$

Now suppose that the number $r=r_s(b)$ is rational;  $r=u/v;$  $u,$ $v$ are coprime. If a number  $b\in \Omega(s)$ is not a multiple of $v$ then, as above, $r_s(b)  = [ (u / v) \cdot b ].$ If  $b$ is  a multiple of $v$ then $$ r_s(b) \ = \ \left[
                                                       \begin{array}{ll}
                                                         r \ b & \hbox{or} \\
                                                         r \ b \ - \ 1 & \hbox{.}
                                                       \end{array}
                                                     \right.
 $$

\begin{lemma}\label{Lem_II.1.6_new} If at least for  one number   $b_0 \in  \Omega  (s)  \bigcap  v  \mathbb{N}$ we have  $r_s(b_0) = r  b_0$ then  for all  $b  \in  \Omega  (s)  \bigcap  v  \mathbb{N} $ we have $ r_s(b)=r  b.$ \end{lemma}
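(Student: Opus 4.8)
The plan is to leverage the already-established inequalities \eqref{Big_frac_1} and \eqref{Big_frac_3}, together with the dichotomy recorded just before the lemma: for $b\in\Omega(s)\cap v\mathbb{N}$ one has either $r_s(b)=rb$ or $r_s(b)=rb-1$. Suppose $r_s(b_0)=rb_0$ for some $b_0\in\Omega(s)\cap v\mathbb{N}$, and let $b\in\Omega(s)\cap v\mathbb{N}$ be arbitrary. First I would pass to a common multiple: pick $c\in\Omega(s)$ divisible by both $b_0$ and $b$; note $c$ is then automatically a multiple of $v$, so the dichotomy applies to $c$ as well. The key step is to show $r_s(c)=rc$, i.e. that the "good" value at $b_0$ propagates upward to $c$. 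For this, apply \eqref{Big_frac_1} with the pair $b_0\mid c$: it gives $r_s(c)/c\ge r_s(b_0)/b_0=r$, hence $r_s(c)\ge rc$; combined with $r_s(c)\le rc$ (which holds since $r(s)\le r_s(b)/b+1/b$ forces $r_s(c)\le r(s)c = rc$, using $r=r(s)$ here — I should double-check that the $r$ in the lemma is indeed the limit $r(s)$, which the preceding paragraph makes clear) we conclude $r_s(c)=rc$.

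Next I would propagate the good value back down from $c$ to $b$. Here the tool is \eqref{Big_frac_3}, namely $r_s(b)=\big[\,r_s(c)/(c/b)\,\big]$. Substituting $r_s(c)=rc$ gives $r_s(b)=\big[\,rc/(c/b)\,\big]=\big[\,rb\,\big]$. Now $rb=(u/v)b$ is an integer because $v\mid b$, so $\big[\,rb\,\big]=rb$, whence $r_s(b)=rb$. Since $b$ was an arbitrary element of $\Omega(s)\cap v\mathbb{N}$, this establishes the lemma.

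The main obstacle — really the only place requiring care — is making sure the chain of inequalities that pins down $r_s(c)=rc$ is airtight: one must know both that $r_s(c)/c\ge r$ (monotonicity from $b_0$, which needs $b_0\mid c$, hence the passage to a common multiple) and that $r_s(c)/c\le r$ (which is the statement $r_s(b)/b\le r(s)$ from the displayed inequalities following Lemma \ref{Lem_II.1.5_new}, valid for every $b\in\Omega(s)$). Once $r_s(c)=rc$ is secured, the descent via \eqref{Big_frac_3} is purely formal, the integrality of $rb$ being exactly what the hypothesis $v\mid b$ delivers. A minor point to state explicitly is that $\mathrm{lcm}(b_0,b)$ is indeed divisible by $v$ whenever $b_0$ (or $b$) is, so that the rational-case dichotomy is legitimately available at $c$.
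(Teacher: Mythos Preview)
Your proof is correct and follows essentially the same route as the paper: pass to a common multiple $c$ of $b_0$ and $b$, propagate the equality $r_s(b_0)=rb_0$ up to $c$ via \eqref{Big_frac_1}, then back down to $b$. The only cosmetic difference is that for the descent the paper uses \eqref{Big_frac_4} together with the dichotomy $r_s(b)\in\{rb,\,rb-1\}$, whereas you invoke \eqref{Big_frac_3} and the integrality of $rb$; these are equivalent (and your later parenthetical correctly identifies $r_s(b)/b\le r(s)$ as the relevant upper bound, despite the slightly garbled first citation of it).
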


\begin{proof} Let $b, $ $ c   \in  \Omega  (s)  \bigcap  v  \mathbb{N}$ and $b$ divides $ c.$ If $r_s(b) = rb$ then by the inequality  (\ref{Big_frac_1}),  we have  $$ r \ =  \ \frac{r_s(b)}{b} \ \leq \ \frac{r_s(c)}{c},$$ which implies $r_s(c)=r  c.$ On the other hand, if $r_s(c)=r  c$ then by the inequality  (\ref{Big_frac_4}),  $$ r \ =  \ \frac{r_s(c)}{c} \ < \ \frac{r_s(b)}{b} \ + \ \frac{1}{b},$$ which implies $r_s(b)> r b -1.$ Hence  $r_s(b)= r b.$ We showed that $r_s(b)=rb$ if and only if  $r_s(c)=rc.$

Now choose $b_1,$ $ b_2   \in  \Omega  (s)  \bigcap  v  \mathbb{N}$ and suppose that $ r_s(b_1)  =  r  b_1.$  There exists  $c   \in  \Omega  (s)  \bigcap  v  \mathbb{N}$ such that  both $ b_1$ and $b_2$ divide $ c.$  In view of the above, $r_s(b_1) = r b_1$ implies  $r_s(c) = r c$ which implies $r_s(b_2) = r b_2.$ This completes the proof of the lemma.  \end{proof}

Recall that for an infinite Steinitz  number $s$ and a real number  $r,$ $1 \leq  r < \infty, $ $$ S  (r,s) \ = \ \Big\{ \ \frac{a}{b} \ s \ \Big| \ a, \ b \ \in \ \mathbb{N}; \ b \ \in \ \Omega  (s), \ a \ \leq \ r   b \ \Big\},$$  $$ S^{+}  (r,s) \ = \ \Big\{ \ \frac{a}{b} \ s \ \Big| \ a, \ b \ \in \ \mathbb{N}; \ b \ \in \ \Omega  (s), \ a \ < \ r  b \ \Big\}.$$ If $r$ is an irrational number or $r=u / v,$ the integers $u,$ $v$ are coprime and  $v\not\in \Omega \ (s)$ then $S  (r,s)  =  S^{+}  (r,s).$ If   $r= u / v,$  $v\in \Omega \ (s)$ then $S^{+}  (r,s)  \subsetneqq  S  (r,s).$

\begin{lemma}\label{Lem_II.1.7_new} The subsets $S  (r,s)$ and  $ S^{+}  (r,s)$ are saturated.
\end{lemma}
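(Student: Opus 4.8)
The plan is to verify the three conditions in the definition of a saturated set, treating $S(r,s)$ and $S^{+}(r,s)$ in parallel. I would first record a bookkeeping fact that trivialises all three checks: if $t\in\mathbb{SN}$ is rationally connected to $s$ and one writes the positive rational $t/s$ in lowest terms as $a/b$ with $a,b\in\mathbb{N}$ coprime, then $b\in\Omega(s)$. Indeed, for a prime $p$ dividing $b$ one has $p\nmid a$, so the exponent of $p$ in $t=(a/b)\,s$ equals the exponent of $p$ in $s$ minus the exponent of $p$ in $b$; since this difference must again be a nonnegative integer or $\infty$, the exponent of $p$ in $b$ cannot exceed that in $s$, and as $p$ was arbitrary, $b$ divides $s$. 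Combining this with the definitions yields the criterion I will use throughout: a Steinitz number $t$ rationally connected to $s$ lies in $S(r,s)$ (respectively $S^{+}(r,s)$) if and only if $a\le rb$ (respectively $a<rb$), where $a/b$ is the lowest-terms form of $t/s$; equivalently, membership is decided by comparing the real number $t/s$ with $r$.

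Condition $1)$ is then immediate, since every element of $S(r,s)$ has the form $(a/b)\,s$ with $a,b\in\mathbb{N}$ and is therefore rationally connected to $s$, so any two of them are rationally connected to each other; the same holds for $S^{+}(r,s)$. For condition $2)$, take $s_2\in S(r,s)$ and write $s_2/s=a/b$ in lowest terms, so $a\le rb$; let $s_1=s_2/c$ with $c\in\Omega(s_2)$. Then $s_1\in\mathbb{SN}$ and $s_1/s=a/(bc)\le a/b\le r$, so $s_1\in S(r,s)$ by the criterion, and the same computation with $<$ replacing $\le$ treats $S^{+}(r,s)$. For condition $3)$, suppose $t$ and $nt$ both lie in $S(r,s)$ for some $n\in\mathbb{N}$ and write $t/s=a/b$ in lowest terms; since $nt/s=na/b$, applying the criterion to $nt$ gives $na/b\le r$. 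For any $i$ with $1\le i\le n$, the number $it$ is again a Steinitz number (a product of a positive integer and a Steinitz number), it is rationally connected to $s$, and $it/s=ia/b\le na/b\le r$, so $it\in S(r,s)$. For $S^{+}(r,s)$ the hypothesis on $nt$ gives $na/b<r$, whence $ia/b\le na/b<r$ and $it\in S^{+}(r,s)$. This exhausts the three conditions.

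I do not expect a genuine obstacle here. The only step that needs care is the preliminary observation that passing to lowest terms forces the denominator to divide $s$, on which all three verifications rest: each of them reduces one of the rationals $t/s$, $s_1/s$, $ia/b$ to lowest terms and reads off the criterion. Everything else is just the monotonicity of the quantity $t/s$ under the two operations involved — dividing the denominator by $c\ge 1$ and replacing the multiplier $n$ by some $i\le n$ — each of which only decreases the number compared with $r$. In particular the argument never uses $r\ge 1$ or $v\in\Omega(s)$, so it is uniform over the parameter ranges in the statement.
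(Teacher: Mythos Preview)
Your approach matches the paper's: reduce each condition to comparing a rational ``ratio'' $t/s$ with $r$, after recording that a coprime denominator must lie in $\Omega(s)$. There is, however, a genuine gap in your treatment of condition~3). You treat $t/s$ as a well-defined positive rational, but this fails whenever $s$ has a prime $p$ with infinite exponent: then $ps=s$, so for example $s$ itself has infinitely many coprime representations $(1/p^{k})s$, and there is no canonical ``lowest-terms form of $t/s$''. Concretely, take $s=2^{\infty}$, $r=1$, $t=s$, $n=3$. With your choice $a/b=1/1$ for $t$, the step ``applying the criterion to $nt$ gives $na/b\le r$'' asserts $3\le 1$; yet $3t=3\cdot 2^{\infty}$ genuinely lies in $S(1,2^{\infty})$, witnessed by the representation $(3/4)\cdot 2^{\infty}$ (here $4\in\Omega(2^{\infty})$ and $3\le 4$). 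So the implication you invoke --- from $nt\in S(r,s)$ to $na/b\le r$ for your fixed $a,b$ --- is false in general.

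The repair is immediate within your own framework: in condition~3), anchor the argument at $nt$ rather than at $t$. If $nt=(A/B)s$ with $B\in\Omega(s)$ and $A\le rB$ (respectively $A<rB$), then for $1\le i\le n$ one has $it=\bigl(iA/(nB)\bigr)s$; reducing $iA/(nB)$ to lowest terms yields a denominator in $\Omega(s)$ by your bookkeeping fact, and the value $iA/(nB)\le A/B\le r$ (respectively $<r$) witnesses $it\in S(r,s)$ (respectively $S^{+}(r,s)$). It is worth noting that the paper's own proof of condition~3) makes the same tacit uniqueness assumption when it passes from $n\cdot(as/(bc))\in S(r,s)$ to $na'\le rbc'$, so this is a subtle slip rather than a divergence in strategy.
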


\begin{proof} The condition $1)$  in the definition of saturated subsets is obviously satisfied. Let us  check the condition $2).$ Let $ (a/ b) \cdot s  \in  S  (r,s) $ (respectively, $(a/b) \cdot s  \in  S^{+}  (r,s) ),$ where $a,$ $b$ are coprime natural  numbers, $b \in \Omega  (s).$ Then $ a  \leq  r  b $ (respectively $a  <    r  b $). Suppose that $ c \in  \Omega  (  \frac{a}{b}  s ).$ We need to show that  $ (a\cdot s) / (b \cdot c)  \in  S  (r,s)$ (respectively, $ (a \cdot s ) / (b \cdot c) \in  S^{+}  (r,s)).$ Let $ d  =  \text{gcd} (a,c),$ $ a  =  d a\,',$ $ c  =  d  c\,'. $ Then $$\frac{a \ s}{b \ c} \ = \ \frac{a\,'}{b \ c\,'} \ s \ \in \ \mathbb{SN}. $$ Since the number $b c\,'$ is coprime with $a\,'$ it follows that $b c\,' \in \Omega  (s).$ The inequality  $a\,'  \leq  r  b c\,' $ (respectively $a\,'  <    r  b c\,'$) is equivalent to the inequality $a  \leq  r  b c$  (respectively $ a  <    r  b c$). The latter  inequality follows from $a  \leq  r  b$  (respectively $ a  <    r  b $). The  condition $2)$ is verified.

Let us check the condition $3).$ As above, we  assume that  $a,$ $b$ are coprime natural numbers, $b\in \Omega (s)$ and $a/ b  \in  S  (r,s) $ (respectively $a / b  \in  S^{+}  (r,s) $). Let $c  \in  \Omega ( (a/ b ) \cdot s ),$ $\text{gcd} (a,c) =  d,$ $ a  =  d a\,',$ $ c  =   dc\,' .$ We have shown above that $bc\,' \in \Omega  (s).$ Let $n\in \mathbb{N}$ and $n   \cdot (as / (bc) )  \in  S  (r,s) $ (respectively $ n \cdot (as / (bc) ) \in S^{+}  (r,s)$). Then $n  a\,'  \leq  rbc\,'$ (respectively $ n  a\,'  <  rbc\,' $).  This immediately implies that for any  $i,$ $1\leq i \leq n,$ we have $i  a\,'  \leq  rbc\,' $ (respectively $ i a\,'  <  rbc\,'$). Hence, $ i  \cdot (as /b)   \in  S  (r,s)$ (respectively $ i  \cdot (as/ b)   \in  S^{+} (r,s)$).  \end{proof}

\begin{lemma}\label{Lem_II.1.8_new}  Let $r=u/ v,$ where $u,$ $v$ are coprime natural  numbers. Let $s$ be an infinite  Steinitz  number and   $v\in \Omega \ (s).$ Then the set $ S^{+}  (r,s)$ is not equal to any of the sets $S  (r\,',s\,'),$ $r\,'\in [1,\infty),$ $s\,' \in \mathbb{SN}.$ \end{lemma}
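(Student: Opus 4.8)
The plan is to distinguish $S^{+}(r,s)$ from every set of the form $S(r',s')$ by exhibiting a structural feature of $S^{+}(r,s)$ that no $S(r',s')$ can have. The natural candidate is the behaviour of the function $b \mapsto r_{S}(b)/b$ along $\Omega(s)$: for $S(r',s')$ the supremum $r'$ is attained by $r_{s'}(b)/b$ whenever $v' \mid b$ (by Lemma~\ref{Lem_II.1.6_new}, once it is attained once it is attained on all multiples of $v'$, and since $v' \in \Omega(s')$ such $b$ exist and are cofinal), whereas for $S^{+}(r,s)$ I claim the strict inequality $a < rb$ forces $r_{s}(b)/b < r(s)$ for every $b \in \Omega(s)$, so the supremum $r(s) = r$ is never attained. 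First I would verify $r(S^{+}(r,s)) = r$ and that the invariant $r(\,\cdot\,)$ of Lemma~\ref{Lem_II.1.4_new} is well defined here (the set is of finite type, by Lemma~\ref{Lem_II.1.2_new}, since $r_{s}(b) < \infty$).

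The key computation is: for $b \in \Omega(s)$, $r_{s}(b) = \max\{i : (i/b)s \in S^{+}(r,s)\} = \max\{i : i < rb\}$. Writing $r = u/v$ with $v \mid s$, choose $b$ a multiple of $v$, say $b = vm$ with $m \in \Omega(s/v)$ — such $b$ are cofinal in $\Omega(s)$. Then $rb = um$ is a positive integer, so $\max\{i : i < um\} = um - 1$, hence $r_{s}(b)/b = (um-1)/(vm) = r - 1/(vm) < r$. Thus along the cofinal set of multiples of $v$ we have $r_{s}(b)/b$ strictly below $r$ and converging to $r$; in particular the supremum $r$ is never attained on $\Omega(s)$. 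Now suppose for contradiction $S^{+}(r,s) = S(r',s')$. By Lemma~\ref{Lem_II.1.5_new}, rescaling, and the fact that the two sets share the same Steinitz numbers, one shows $r' = r(S(r',s'))$ equals $r(S^{+}(r,s)) = r$, so $r'$ is rational, $r' = r$; and since $s, s' \in S^{+}(r,s)$ are rationally connected with $r(s) = r(s')$ forced by Lemma~\ref{Lem_II.1.5_new}, after passing to a common Steinitz number I may take $s' = s$. But then $v' = v$, $v \in \Omega(s') = \Omega(s)$, and by the analysis preceding Lemma~\ref{Lem_II.1.6_new} applied to $S(r,s)$, we have $r_{s}^{S(r,s)}(b) = rb$ for $b$ a multiple of $v$ — the supremum \emph{is} attained — contradicting the strict inequality just established for $S^{+}(r,s)$.

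The main obstacle is the reduction step: a priori $S^{+}(r,s)$ could equal $S(r',s')$ with $s' \neq s$ and $r'$ looking different, so I must pin down that the pair $(r', s')$ is essentially forced. This is handled by observing that $r(\,\cdot\,)$ is an intrinsic invariant of a finite-type saturated set (Lemma~\ref{Lem_II.1.4_new}) which scales correctly under rational connection (Lemma~\ref{Lem_II.1.5_new}): if $s' = (a/b)s$ then $r(s') = (a/b)r(s)$, so the value $r_{S}(s)$ is determined up to the same rational factor that relates $s$ and $s'$, and "$r_{S}(s)$ is attained on a cofinal subset of $\Omega(s)$" is a property invariant under replacing $s$ by any rationally connected $s'' \in S$. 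Hence the non-attainment witnessed above for $S^{+}(r,s)$ is genuinely incompatible with the attainment that Lemma~\ref{Lem_II.1.6_new} guarantees for $S(r',s')$ with $v' \in \Omega(s')$, and the proof is complete.
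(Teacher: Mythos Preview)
Your proposal is correct and follows essentially the same route as the paper. Both arguments first reduce to a common base Steinitz number (the paper does this explicitly by showing $S(r,s_1)=S(rb/a,s_2)$ whenever $s_2=(a/b)s_1\in S(r,s_1)$, and likewise for $S^{+}$; you invoke Lemma~\ref{Lem_II.1.5_new} to the same effect), then use that $r_S(s)$ is determined by $S$ and $s$ alone to force $r'=r$, and finally distinguish $S(r,s)$ from $S^{+}(r,s)$. The only cosmetic difference is the last step: the paper simply observes that the element $(u/v)s$ lies in $S(r,s)$ but not in $S^{+}(r,s)$, whereas you phrase the same fact as ``the supremum $r$ is attained by $r_s(b)/b$ for $b\in v\mathbb{N}\cap\Omega(s)$ in $S(r,s)$ but never in $S^{+}(r,s)$.'' Unwinding your attainment criterion at $b=v$ gives exactly the paper's distinguishing element, so the two arguments coincide.
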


\begin{proof} Let $s_2 \in S(r,s_1)$ (respectively $s_2 \in S^{+}(r,s_1)$).  Then $ s_2  =  (a/ b) \cdot  s_1,$ where $a,$ $ b  \in \mathbb{N},$ $b\in \Omega(s_1).$ By Lemma \ref{Lem_II.1.5_new}, $$ S   (r,s_1) \ = \ S    \Big(  r \ \frac{b}{a}, s_2  \Big)   \ \  \text{\Big(respectively} \   \  S^{+}   (r,s_1) \ = \ S^{+}    \Big( r \ \frac{b}{a}, s_2  \Big) \Big).$$ We showed that the set  $S  (r,s)$ (respectively $S^{+}  (r,s)$) is determined by any Steinitz  number $s\,' \in S  (r,s)$ (respectively $s\,' \in S^{+}  (r,s)$) with an appropriate recalibration of $r.$

Let $ S  =  S  (r_1, s_1)  =  S^{+}  (r_2, s_2). $ Choosing an arbitrary Steinitz  number  $s\in S$ we get $ S  (r\,'_1, s)  =  S^{+}  (r\,'_2, s)$ for  some  $r\,'_1,$ $r\,'_2 \in [1,\infty).$ The number $r\,'_2 = u/v$ is rational, $\text{gcd}  (u,v)   =  1$ and $ v  \in  \Omega  (s). $

The number $r$ is uniquely determined by a saturated subset  $S$ and a choice of  $s\in S.$ Hence $r\,'_1  =  r\,'_2. $ Now it remains to notice that for a rational number  $r=u/v,$ $\text{gcd}  (u,v)   =  1$ and  an infinite Steinitz  number $s,$ such that  $v\in \Omega  (s),$ we have  $S  (r,s)  \neq  S^{+}  (r,s). $ This completes the proof of the lemma. \end{proof}

\begin{lemma}\label{Lem_II.1.9_new} Let $S   \subset  \mathbb{SN}   \diagdown  \mathbb{N}$ be a saturated subset of finite type,  $s   \in  S,$ $r =  r_S(s)  \in   [1,  \infty).$ Then  $S   =  S  (r,s) $  or $  S  =  S^{+}  (r,s).$ \end{lemma}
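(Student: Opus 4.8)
The plan is to show that $S$ is sandwiched between $S^{+}(r,s)$ and $S(r,s)$, and that these two sets differ by at most the single "boundary layer" $\{ r \cdot b \cdot (s/b) \mid b \in \Omega(s) \cap v\mathbb{N}\}$ when $r = u/v$ is rational with $v \in \Omega(s)$; in all other cases $S^{+}(r,s) = S(r,s)$ and there is nothing left to decide. First I would fix $s \in S$ and write $r = r_S(s)$. For an arbitrary $s' \in S$, rational connectedness gives $s' = (a/b)\, s$ with $a,b$ coprime and $b \in \Omega(s)$ by condition $2)$; moreover $s' \in S$ means precisely that $a \le r_s(b)$. So membership in $S$ is governed entirely by the numbers $r_s(b)$, and the containments I want will follow from the two-sided estimate already established in the excerpt, namely $r_s(b) \le r(s)\, b \le r_s(b) + 1$, i.e. $r_s(b) \in \{[rb],\, [rb]\text{ when }rb\notin\mathbb{Z},\ rb\text{ or }rb-1\text{ when }rb\in\mathbb{Z}\}$.

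The key steps, in order. Step 1: From $a \le r_s(b) \le rb$ we get $S \subseteq S(r,s)$ immediately. Step 2: Conversely, if $a < rb$ with $a,b$ coprime, $b \in \Omega(s)$, then since $r_s(b) \ge [rb] \ge a$ (using $a < rb \Rightarrow a \le [rb]$ as $a$ is an integer strictly below $rb$), we get $(a/b)s \in S$; hence $S^{+}(r,s) \subseteq S$. Step 3: If $r$ is irrational, or $r = u/v$ with $\gcd(u,v)=1$ and $v \notin \Omega(s)$, then $rb \notin \mathbb{Z}$ for every $b \in \Omega(s)$, so $S^{+}(r,s) = S(r,s)$ and therefore $S = S(r,s) = S^{+}(r,s)$. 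Step 4: In the remaining case $r = u/v$, $\gcd(u,v)=1$, $v \in \Omega(s)$, invoke Lemma \ref{Lem_II.1.6_new}: either $r_s(b_0) = r b_0$ for some (equivalently all) $b_0 \in \Omega(s) \cap v\mathbb{N}$, or $r_s(b) = rb - 1$ for all such $b$. In the first subcase, every boundary element $r\cdot(s/v^{\,k}\cdot\text{stuff})$ — more precisely every $(a/b)s$ with $b \in \Omega(s)$, $a = rb$ — lies in $S$ (one reduces $a/b$ to lowest terms $a'/b'$ and checks $b' \in \Omega(s) \cap v\mathbb{N}$ and $a' = rb'$), so $S = S(r,s)$. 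In the second subcase no such boundary element lies in $S$, so $S = S^{+}(r,s)$.

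The main obstacle I expect is Step 4, specifically the bookkeeping needed to pass between an arbitrary representation $(a/b)s$ of an element and its lowest-terms form $(a'/b')s$, and to confirm that $r_s$ behaves consistently across all $b \in \Omega(s)\cap v\mathbb{N}$ — this is exactly what Lemma \ref{Lem_II.1.6_new} is designed to supply, together with the observation that when $\gcd(a,b)=1$ and $(a/b)s$ is a Steinitz number one automatically has $b \in \Omega(s)$, and that $a = rb = (u/v)b$ with $\gcd(a,b)=1$ forces $v \mid b$. Once the reduction-to-lowest-terms lemma and Lemma \ref{Lem_II.1.6_new} are in hand, the dichotomy "$S = S(r,s)$ or $S = S^{+}(r,s)$" is immediate. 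A minor subtlety is the irrational case of Step 3, where I should remark that $r$ irrational cannot actually occur for a *spectrum* but is allowed for an abstract saturated set, and the argument above covers it uniformly.
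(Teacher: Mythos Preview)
Your argument is correct and follows essentially the same route as the paper: both characterize membership in $S$ via $a\le r_s(b)$, use the two--sided estimate $r_s(b)\le rb\le r_s(b)+1$ to pin $S$ between $S^{+}(r,s)$ and $S(r,s)$, and invoke Lemma~\ref{Lem_II.1.6_new} for the dichotomy when $r=u/v$ with $v\in\Omega(s)$. Two small corrections: in Step~2 the chain ``$r_s(b)\ge[rb]\ge a$'' can fail when $rb\in\mathbb{Z}$ (then possibly $r_s(b)=rb-1<[rb]=rb$), though the conclusion $a\le r_s(b)$ still holds because $a<rb$ with both integers forces $a\le rb-1\le r_s(b)$; and your aside that irrational $r$ ``cannot actually occur for a spectrum'' is incorrect --- by Theorem~\ref{Th_II.2.1_new}(1) every $S(r,s)$ with $r$ irrational is the spectrum of some countable--dimensional locally matrix algebra.
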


\begin{proof} Recall that for a natural number $b \in \Omega  (s)$ we defined $$r_s (b) \ = \ \max \ \Big\{ \ i \ \in \ \mathbb{N} \  \ \Big| \ \ i \ \frac{s}{b}  \  \in \ S \ \Big\}.$$ We showed that if $r$ is an irrational number or  $r=u/v;$ $u,$ $v$ are coprime  and  $v\not\in \Omega \ (s)$ then $r_s(b)  =  [ rb  ] $ for an arbitrary $ b  \in  \Omega  (s).$

An arbitrary Steinitz  number $s\,'\in S$ is representable as  $ s\,' =  (a /b) \cdot  s,$ where $a,$ $b$ are coprime natural numbers.  Clearly,  $ b\in \Omega  (s)$ and $a  \leq  r_s(b)  =  [ rb ].$ That is why in the case when  $r$ is irrational or  $r=u/v,$ $\text{gcd}  (u,v)   =  1,$  $v \not\in \Omega  (s)$ we have $ S  =  S   (r,s)  =  S^{+}  (r,s).$

Suppose now that $  r  =  u/ v,$ $\text{gcd}  (u,v)  =  1,$ $  v  \in  \Omega  (s).$ If $b  \in   \Omega  (s)  \diagdown  v\mathbb{N}$ then  as above $r_s(b)  =  [  rb ].$ By Lemma \ref{Lem_II.1.6_new}, either for all $b  \in   \Omega  (s)  \cap  v\mathbb{N}$ we have $ r_s(b)  =  rb $ or for all  $b  \in   \Omega  (s)  \cap  v\mathbb{N} $ we have $ r_s(b)  =  rb  -  1. $
In the first case  $S = S (r,s),$  in the second case  $S = S^{+}  (r,s).$  \end{proof}

\begin{lemma}\label{Lem_II.1.10_new} Let $S\subseteq \mathbb{N}$ be a saturated subset. Then either  $S=\{1,2,\ldots,n\}$ for some $n \in \mathbb{N}$ or $S=\mathbb{N}.$ \end{lemma}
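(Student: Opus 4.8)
The plan is to use the defining conditions of a saturated subset, but only those that survive when all Steinitz numbers in sight are ordinary natural numbers. Condition $1)$ (rational connectedness) is vacuous for subsets of $\mathbb{N}$: any two naturals $m, n$ are rationally connected via $n = (n/m)\cdot m$. So the content comes from conditions $2)$ and $3)$. First I would note that $1 \in S$: pick any $s \in S$ (the empty set is degenerate, or one may observe $S$ must be nonempty since a spectrum always contains $\mathbf{st}(eAe)$ for some idempotent $e$; as an abstract statement about saturated sets, if $S = \emptyset$ it is excluded by convention, so assume $S \neq \emptyset$), and for any $b \in \Omega(s)$ with $b = s$ we get $s/s = 1 \in S$ by condition $2)$. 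Hence $S$ is a nonempty subset of $\mathbb{N}$ containing $1$.

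Next I would prove that $S$ is an \emph{initial segment} of $\mathbb{N}$: if $n \in S$ and $1 \leq i \leq n$, then $i \in S$. This is exactly condition $3)$ applied with $s = 1$: since $1 \in S$ and $n = n\cdot 1 \in S$, condition $3)$ gives $i\cdot 1 = i \in S$ for all $1 \leq i \leq n$. Therefore $S$ is downward closed in $\mathbb{N}$.

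Finally, a downward-closed nonempty subset of $\mathbb{N}$ is either all of $\mathbb{N}$ (if it is unbounded) or $\{1, 2, \ldots, n\}$ where $n = \max S$ (if it is bounded). In the bounded case, $n \in S$ by definition of maximum, and every $i \leq n$ lies in $S$ by the initial-segment property, while no $i > n$ lies in $S$ by maximality; so $S = \{1,\ldots,n\}$. In the unbounded case, for every $m \in \mathbb{N}$ there is some $n \in S$ with $n \geq m$, whence $m \in S$ by the initial-segment property, so $S = \mathbb{N}$. This exhausts the possibilities and completes the proof.

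I expect essentially no obstacle here: the lemma is a formal consequence of condition $3)$ with the base Steinitz number taken to be $1$, together with the trivial remark that $1 \in S$. The only point requiring a word of care is the degenerate case $S = \emptyset$, which should be dispatched by the standing convention that saturated sets arising as spectra are nonempty (or one simply adds "nonempty" to the hypothesis, as is implicit throughout the section).
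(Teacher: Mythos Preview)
Your proof is correct and follows essentially the same route as the paper: both arguments reduce to observing that $1\in S$ (via condition~2) with $b=s$) and then applying condition~3) with base $s=1$ to conclude that $S$ is downward closed in $\mathbb{N}$. The paper's version is terser and omits the explicit discussion of the empty case, but the logical content is identical.
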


\begin{proof} First, notice that the subsets $\{1,2,\ldots,n\}$ and $\mathbb{N}$ are saturated.

Now let   $S\subseteq \mathbb{N}$ be a saturated subset.  If $n\in S$ then $n\in \Omega  (n)$ and $n  \cdot (n/n)  \in  S. $ By the condition $3),$ all natural numbers $ i  =  i  \cdot  (n/ n),$ $ 1\leq i \leq n,$ lie in $S.$ This implies the assertion of the lemma \end{proof}

Now, Theorem \ref{Th_II.1.11_new} follows from Lemmas \ref{Lem_II.1.9_new}, \ref{Lem_II.1.10_new}.

\section{Countable--dimensional locally matrix algebras}

For an algebra $A$ and an idempotent $0\neq e\in A$ we call the subalgebra $eAe$ a \emph{corner} of the algebra $A.$

Let $ A_1  \subset  A_2  \subset  \cdots$ be an ascending chain of unital locally matrix algebras,  $A_i$ is a corner of the algebra $A_{i+1},$ $i\geq 1,$ $$ A \ = \ \bigcup_{i=1}^{\infty} \ A_i .$$ Clearly, $\text{Spec}  (A_1)  \subseteq \text{Spec} (A_2)  \subseteq  \cdots.$

\begin{lemma}\label{Lem_II.2.2_new}  $$\emph{Spec}  (A) \ = \ \bigcup_{i=1}^{\infty} \ \emph{Spec}  (A_i).$$ \end{lemma}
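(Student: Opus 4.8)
The plan is to prove the two inclusions separately. The inclusion $\bigcup_{i=1}^{\infty} \text{Spec}(A_i) \subseteq \text{Spec}(A)$ is the easy half: if $s \in \text{Spec}(A_i)$ for some $i$, then $s = \mathbf{st}(eA_ie)$ for some nonzero idempotent $e \in A_i$. Since $A_i$ is a corner of $A_{i+1}$ and each $A_j$ is in turn a corner of $A$, the idempotent $e$ is also an idempotent of $A$, and I claim $eAe \cap A_i = eA_ie$ in a way that forces $\mathbf{st}(eAe) = \mathbf{st}(eA_ie)$. More precisely, the key subsidiary observation is that if $C$ is a corner of $D$ and $e$ is an idempotent of $C$, then $eCe = eDe$: indeed $C = fDf$ for the identity $f$ of $C$, and $e = fef$ forces $eDe = efDfe = e(fDf)e = eCe$. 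Applying this along the chain gives $eAe = eA_ie$, hence $\mathbf{st}(eAe) = s$ and $s \in \text{Spec}(A)$.

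For the reverse inclusion $\text{Spec}(A) \subseteq \bigcup_{i=1}^{\infty} \text{Spec}(A_i)$, take an arbitrary nonzero idempotent $e \in A = \bigcup_i A_i$. Since the chain is ascending, $e \in A_i$ for some $i$. Then by the same corner identity as above, $eAe = eA_ie$, so $\mathbf{st}(eAe) = \mathbf{st}(eA_ie) \in \text{Spec}(A_i)$. This shows every element of $\text{Spec}(A)$ already appears in some $\text{Spec}(A_i)$, completing the second inclusion and hence the equality.

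The main obstacle — and really the only nontrivial point — is establishing the corner identity $eCe = eDe$ when $C$ is a corner of $D$ and $e$ is an idempotent of $C$, and more generally making sure that when we pass from $A_i$ to the union $A$, corners are genuinely preserved: that $A_i$ being a corner of $A_{i+1}$ for all $i$ implies $A_i$ is a corner of $A$. For the latter I would argue that $A = \bigcup_j A_j$ has identity element obtained as a suitable limit/eventual stabilization is \emph{not} available since $A$ need not be unital; instead I would show directly that $A_i = f_i A f_i$ where $f_i$ is the identity of $A_i$. For $x \in A$ we have $x \in A_j$ for some $j \ge i$, and since $A_i$ is a corner of $A_j$ (by composing the corner relations $A_i = f_iA_{i+1}f_i$, $A_{i+1} = f_{i+1}A_{i+2}f_{i+1}$, \ldots, using $f_if_{i+1} = f_i$ etc.), $f_i x f_i \in f_i A_j f_i = A_i$; conversely $A_i \subseteq f_iAf_i$ is clear. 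Hence $A_i$ is a corner of $A$, and then the previously proven identity $eAe = eA_ie$ for $e \in A_i$ applies. Once this bookkeeping is in place, both inclusions are immediate.
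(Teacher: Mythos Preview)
Your proof is correct and follows essentially the same approach as the paper: both inclusions are obtained from the single identity $eA_ie = eAe$ for any idempotent $e\in A_i$. The paper's proof simply asserts this identity, whereas you supply the details (the corner identity $eCe=eDe$ for $C=fDf$ and $e\in C$, and the verification that each $A_i$ is a corner of $A$, not just of $A_{i+1}$); these are exactly the steps the paper leaves implicit.
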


\begin{proof} For an arbitrary idempotent $e\in A_i$ we have $e A_i e   =   e A e,$ hence $\text{Spec}  (A_i) \subseteq  \text{Spec}  (A) .$ On the other hand, an arbitrary idempotent $e\in A$ lies  in one of the subalgebras $A_i.$ Hence $\mathbf{st}  (  eAe)  =  \mathbf{st}  (  eA_i e)  \in  \text{Spec}  (A_i). $  \end{proof}

\begin{proof}[Proof of Theorem $\ref{Th_II.2.1_new}$ \emph{(1)}] To start with we notice that  $\{  1,  2,  \ldots,  n  \}$ $  =$ $ \text{Spec}  (M_n(\mathbb{F})). $ Let $s$ be a Steinitz  number. In \cite{BezOl}, we showed that there exists a unital locally matrix algebra $A,$ $\text{dim}_{\mathbb{F}}\, A\leq \aleph_0,$ such that $\mathbf{st}(A)=s.$ Consider the algebra  $M_{\infty}(A)$ of infinite $\mathbb{N}  \times \mathbb{N}$--matrices, having finitely many nonzero entries. The algebra $M_n(A)$ of $n \times n $--matrices over $A$ is embedded in $M_{\infty}(A)$ as a north--west corner, $$ M_1(A) \ \subset \ M_2(A) \ \subset \cdots , \ \ \ \  M_{\infty}(A) \ = \ \bigcup_{n= 1}^{\infty} \ M_n(A). $$ In particular, it implies that  $M_{\infty}(A)$ is a locally matrix algebra. We will show that
\begin{equation}\label{Spec_M_inf}
\text{Spec}  \big(  M_{\infty}(A)  \big) \ = \ S   (\infty,s) .
\end{equation}
Indeed, by Lemma \ref{Lem_II.2.2_new}, $$\text{Spec}  \big(  M_{\infty}(A) \big) \ = \ \bigcup_{n = 1}^{\infty} \ \text{Spec}  \big(  M_n(A)  \big).$$ We have $ \mathbf{st}  (M_n(A))  = n  s.$ In \cite{Morita_BO}, we showed that  $$\text{Spec}  \big( \ M_{n}(A) \ \big) \ = \ \Big\{ \ \frac{a}{b}  \ n \ s \ \Big| \  b  \in  \Omega  (ns), \ a,  b  \in  \mathbb{N}; \  1  \leq  a  \leq  b  \Big\} .$$ This implies  $\text{Spec} (  M_{n}(A) )  \subseteq  S  (\infty, s).$ A Steinitz  number $ (a/ b) \cdot  s,$ $b \in  \Omega  (s)$ lies in $\text{Spec}  (  M_{n}(A) )$ provided that  $a/b \leq n.$ This completes the proof of  (\ref{Spec_M_inf}). In particular, $\text{Spec} (  M_{\infty}(\mathbb{F}) )  =  \mathbb{N}.$

Consider now a  saturated subset $S =  S  (r,s)$ or $S  =  S^{+} (r,s),$ $ 1  \leq  r  <  \infty,$ where  $s$ is an infinite Steinitz  number. Choose a sequence $b_1,$ $ b_2,$ $ \ldots  \in  \Omega  (s)$  such that  $ b_i$ divides  $ b_{i+1},$ $ i  \geq  1, $ and  $s$ is the least common multiple of  $b_i,$ $i \geq 1.$ There exists a unique  (up to isomorphism) unital countable--dimensional locally matrix algebra $A_{s/b_i}$ such that  $ \mathbf{st}   (A_{s/b_i})  =  s/ b_i . $ Let $A_i   =  M_{r_s(b_i)} (  A_{s/b_i} ). $ We have $$\mathbf{st}  (A_{s/b_i} ) \ = \ \mathbf{st}   \Big( \big(  M_{b_{i+1}/b_i}  \big(  A_{s/b_{i+1}}  \big)  \Big) \ = \ s/b_{i+1}.$$ Hence, by Glimm's Theorem, $A_{s/b_i}  \cong   M_{b_{i+1}/b_i} (  A_{s/b_{i+1}} ) $ and, therefore, $$A_{i} \ = \  M_{r_s(b_i)} \ \big( \ A_{s/b_{i}} \ \big) \ \cong \ M_{r_s(b_i)\, \cdot\, \frac{b_{i+1}}{b_i}} \ \big( \ A_{s/b_{i+1}} \ \big).$$ By the inequality  (\ref{Big_frac_1}), $r_s(b_i)  \cdot  (b_{i+1} / b_i ) \leq  r_s(b_{i+1}).$ Hence, the algebra $A_i$ is embeddable in the algebra $A_{i+1} $ as a north--west corner. Let $$ A \ = \ \bigcup_{i= 1}^{\infty} \  A_i.$$  We will show that  $\text{Spec}  (A)=S.$ Let $0 \neq e\in A$ be an idempotent. Then  $e\in A_i$ for some   $i\geq 1.$ In \cite{Morita_BO}, we showed that  $$\mathbf{st}  (eA_i e) \ = \ \frac{a}{b} \ \mathbf{st}  (A_i), $$ where $a,$ $b\in \mathbb{N};$ $a,$ $b$ are coprime natural numbers; $ b  \in  \Omega  (  \mathbf{st}  (A_i) ),$ $a  \leq  b.$ Futhermore, $$ \mathbf{st}  (A_i) \ = \ r_s(b_i) \ \frac{s}{b_i}, \ \ \ \ \mathbf{st}  (eA_i e) \ = \ \frac{a}{b} \ r_s(b_i) \ \frac{s}{b_i}. $$ Let $d   =  \text{gcd}  (b, r_s(b_i)),$ $b  =  d b\,',$ $ r_s(b_i)  =  d  \cdot  r_s(b_i)' .$ So, $$\mathbf{st} ( eA_{i}e  ) \ = \ \frac{a\cdot r_{s}(b_{i})'}{b\,'} \ \cdot \ \frac{s}{b_{i}} \ \in \ \mathbb{SN}.$$ This implies that  $b\,'  \in   \Omega   ( s/ b_{i}).$ Therefore, $b\,'b_{i}\in \Omega  (s).$ To show that  $\mathbf{st}   ( eA_{i}e  ) $ lies in $S  (r,s) $ (respectively $S^{+}  (r,s)$) we need to verify that  $a\cdot r_{s}(b_{i})\,'  \leq   r\,b\,'\,b_{i}$ (respectively $a\cdot r_{s}(b_{i})\,'  <  r\,b\,'\,b_{i}$). Multiplying both sides of the inequality by $d$ we get $a\cdot r_{s}(b_{i}) \leq  r\,b\,b_{i}$ (respectively $a\cdot r_{s}(b_{i}) <  r\,b\,b_{i}$). This inequality holds since $a  \leq  b $ and $r_{s}(b_{i}) \leq r\cdot b_{i}$ (respectively $ a \leq  b$ and $ r_{s}(b_{i})  <  r\cdot b_{i} $). We proved that $\text{Spec} (A)\subseteq S.$

Let us show that $S\subseteq \text{Spec} (A).$ Consider a Steinitz  number $(a /b) \cdot  s  \in  S,$ where $ a, $ $ b  \in  \mathbb{N}; $ $b   \in  \Omega  (s),$ $ a \leq   r\,b $ in the case $S  =  S   (r,s) $ or $a  <  r\,b$ in the case $ S   =  S^{+}  (r,s).$

There exists a member of our sequence  $b_{i}$ such that  $b $ divides $b_{i},$ $b_{i}  =  k\cdot b,$ $ k  \in  \mathbb{N}.$ Then $(a/b) \cdot s  = (a\,k / b_{i}) \cdot s.$

We will show that $ a\,k\leq r_{s}(b_{i}).$ Indeed, multiplying both sides of the inequality by  $b$ we get $ab_{i} \leq  r_{s}(b_{i})  b.$ Let $S=S  (r,s).$ Then $a\leq rb.$
Since $a\in \mathbb{N}$ it implies  $a\leq [r\,b].$ Furthermore, $r_{s}(b_{i}) =  [r b_{i}]  =  [r b  k].$ So, it is sufficient to show that  $[r  b]  k  \leq  [r  b k].$ This inequality holds since  $[r b] k $ is an integer and $ [r b]  k  \leq  r  b k.$

Now suppose that  $S=S^{+}  (r,s).$ Then $a< rb,$
$$r_{s}(b_{i})=\left\{
                 \begin{array}{ll}
                  [r\ b_{i}], & \hbox{if} \  \ \ \ r \ b_{i} \ \not\in \ \mathbb{N}, \\
                  r\ b_{i}\ - \ 1, & \hbox{if} \ \ \ \   r \ b_{i} \ \in \ \mathbb{N}.
                 \end{array}
               \right.$$
There are three possibilities:

\begin{enumerate}
    \item[1)] $r\,b\in \mathbb{N}$ and, therefore,  $r\,b_{i}\in \mathbb{N}.$ In this case $a  \leq  r b  - 1,$ $ r_{s}(b_{i}) = r b_{i}  - 1.$ We have $a b_{i} \leq  (r  b  -  1)  b_{i}  \leq (r b_{i} -\ 1) b =  r_{s}(b_{i})  b;$
    \item[2)] $r\,b \not \in \mathbb{N},$ but $r\, b_{i}\in \mathbb{N}.$ In this case $a  \leq  [r  b],$ $r_{s}(b_{i}) =  r  b_{i}  -  1,$ we have
$a b_{i} \leq  [r b]  b_{i},$ $ r_{s}(b_{i})  b  =  (r b_{i}  -  1)  b.$ Hence, it is  sufficient to show that $[r  b]  k  \leq  r b_{i}  -  1  =  r  b  k  -  1.$
    The number  $[r\, b] \, k$ is an integer and $[r b]  k  <  r b k$ since $ [r b]  <  r b.$ This implies the claimed inequality;
    \item[3)] $r\, b_{i}\not \in \mathbb{N}$ and, therefore,  $r\, b\not \in \mathbb{N}.$ In this case $a  b_{i}  \leq  [r  b]  b  k,$ $r_{s}(b_{i}) b = [r  b k]  b$ and it remains to notice that $[r\, b]\,k\leq [r\,b\,k].$
\end{enumerate}
We showed that both for $S=S  (r,s)$ and for  $S=S^{+}  (r,s)$ there holds the inequality $ak \leq r_s(b_i).$

Recall that \ $A_{i} =  M_{r_{s}(b_{i})} ( A_{s/b_{i}} ).$ \ Consider the  north--east corner \  $M_{ak}\left ( A_{s/b_{i}} \right )$ of the algebra $A_{i}.$ We have $$\mathbf{st}  \left ( M_{ak}   \left( A_{s/b_{i}} \right)    \right)   \ =\ a \ k \ \cdot \ \frac{s}{b_{i}} \ = \ \frac{a}{b} \ s,$$
and, therefore,  $S\subseteq \text{Spec}  (A).$ This completes the proof of Theorem~\ref{Th_II.2.1_new}~$(1).$ \end{proof}

For the proof of Theorem \ref{Th_II.2.1_new} $(2)$ we will need several lemmas on extension of isomorphisms.

\begin{lemma}\label{Lem_II.2.3_new}  Let $A$ be a locally matrix algebra and  let $A_1$ be a subalgebra of $A$ such that $A_1 \cong M_{n}(\mathbb{F}).$ Then every automorphism of the algebra $A_1$ extends to an automorphism of the algebra   $A.$
\end{lemma}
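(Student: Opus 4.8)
The plan is to exploit the internal structure of $A_1\cong M_n(\mathbb F)$ and to observe that an automorphism of $M_n(\mathbb F)$ is inner. Indeed, by the Noether–Skolem theorem, every automorphism $\varphi$ of $M_n(\mathbb F)$ has the form $\varphi(x)=uxu^{-1}$ for some invertible $u\in M_n(\mathbb F)$. The key point is that an element $u$ that is invertible \emph{inside the unital algebra $A_1=M_n(\mathbb F)$} need not be invertible in $A$ (since $A$ may be non-unital, or its unit, if any, may be larger than the unit $e$ of $A_1$); but $u$ and $u^{-1}$ still multiply to $e$, and $e$ is the identity element of the corner $eAe$. So the recipe for the extension should be: set $\tilde\varphi(a)=uau^{-1}$ for $a$ in the corner $eAe$, and somehow deal with the complementary part of $A$.

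Concretely, first I would pass from the abstract automorphism $\varphi$ of $A_1$ to an invertible element $u\in A_1$ with $u\,u^{-1}=u^{-1}\,u=e$ realizing it by Noether–Skolem. Next, given an arbitrary finite subset $X\subset A$, choose (by the locally matrix property, and using the claim proved at the start of the proof of Theorem~\ref{Th1_Spec} that any two idempotents lie below a common one) a matrix subalgebra $B\cong M_m(\mathbb F)$ of $A$ with $A_1\subseteq B$ and $X\subset B$; let $f$ be the identity of $B$. Inside $B$, the idempotent $e$ has some rank, and after conjugating by a suitable unit of $B$ we may assume $A_1$ sits as the top-left $n\times n$ block, so $e=\mathrm{diag}(1,\dots,1,0,\dots,0)$ with $n$ ones. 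Then $v:=u+(f-e)\in B$ is invertible in $B$ with inverse $u^{-1}+(f-e)$, and conjugation by $v$ is an automorphism of $B$ restricting to $\varphi$ on $A_1$.

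The main obstacle is \textbf{coherence}: the element $v$ constructed above depends on the choice of the bigger matrix subalgebra $B$, and different choices give different conjugations on overlaps, so one cannot simply patch these local automorphisms together into a single automorphism of $A$. The way around this is to build $\tilde\varphi$ on all of $A$ directly from $u$: for $a\in A$, note that $a=eae+ea(1{-}e)+(1{-}e)ae+(1{-}e)a(1{-}e)$ only makes sense when $A$ has a unit, so instead one works relatively — for any idempotent $g\ge e$ in $A$ and any $a\in gAg$, set $\tilde\varphi(a)=(u+(g-e))\,a\,(u^{-1}+(g-e))$, and check (i) this is well-defined independently of $g$ (if $g'\ge g\ge e$, the two formulas agree on $gAg$ because $u+(g'-e)$ and $u+(g-e)$ differ by $g'-g$, which is orthogonal to everything in $gAg$), (ii) every $a\in A$ lies in some such $gAg$ (take $g$ the identity of a matrix subalgebra containing $a$ and $e$), (iii) $\tilde\varphi$ is an algebra homomorphism (it is multiplicative and additive on each $gAg$, and any two elements of $A$ lie in a common $gAg$), and (iv) $\tilde\varphi$ is bijective, with inverse given by the same recipe using $u^{-1}$. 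Restricting to $A_1=eAe$ (take $g=e$) gives $\tilde\varphi(a)=uau^{-1}=\varphi(a)$, so $\tilde\varphi$ extends $\varphi$. The routine verifications in (i)–(iv) are straightforward once the relative framework is set up; the conceptual content is entirely in recognizing that $u+(g-e)$ is the correct "partial unit" replacement for an honest invertible element.
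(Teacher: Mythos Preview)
Your proof is correct and follows essentially the same strategy as the paper: invoke Skolem--Noether to realize $\varphi$ as conjugation by an invertible $u\in A_1$, then extend to $A$ by ``completing'' $u$ to $u+(\text{complement of }e)$. The only difference is organizational. You build a relative framework with idempotents $g\ge e$ and verify coherence across choices of $g$; the paper instead writes down the single global formula
\[
\tilde\varphi(a)=x^{-1}ax+x^{-1}a(1-e)+(1-e)ax+(1-e)a(1-e)
\]
via the Peirce decomposition. Your remark that the Peirce decomposition ``only makes sense when $A$ has a unit'' is the one misstep: the expressions $ea(1-e):=ea-eae$, $(1-e)ae:=ae-eae$, $(1-e)a(1-e):=a-ea-ae+eae$ are perfectly good elements of $A$ summing (with $eae$) to $a$, so the paper's direct formula works without a global unit and without your relative apparatus. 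Equivalently, one may pass to the unitization $A^{+}=A\oplus\mathbb F$ and conjugate by the honest unit $u+(1-e)\in A^{+}$, which visibly preserves $A$. Your approach is sound but longer than necessary.
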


\begin{proof} Let $e$ be the identity element of the subalgebra  $A_1.$ Then the corner  $eAe$  is a unital locally matrix algebra. Let  $C$ be  the centraliser of the subalgebra $A_1$ in $eAe.$
By Wedderbern's Theorem (see, \cite{Drozd_Kirichenko,Jacobson_3}) we have  $e\,A\,e  =A_1 \otimes_{\mathbb{F}}  C.$ An arbitrary automorphism  $\varphi$ of the subalgebra $A_1$  is inner, that is, there exists an invertible element $x$ of the subalgebra $A_1$  such that  $\varphi(a)  =  x^{-1}ax $ for all elements $ a \in A_1.$ Conjugation by the element $x\otimes e$ extends  $\varphi$ to an automorphism of the algebra $eAe.$ Consider the Peirce decomposition $$A \ = \ eAe \ + \ eA(1-e) \ +\ (1-e)Ae \ + \ (1-e)A(1-e),$$ and the mapping $$\tilde{\varphi} \colon \ A \ \ni \ a \ \mapsto \ x^{-1}ax \ + \ x^{-1}a(1-e) \ + \ (1-e)ax \ + \ (1-e)a(1-e).$$ The mapping $\tilde{\varphi}$ extends $\varphi$ and  $\tilde{\varphi}\in \text{Aut}(A).$ This completes the proof of the lemma. \end{proof}

\begin{lemma}\label{Lem_II.2.4_new} Let $A$ be a unital locally matrix algebra with an idempotent $e \neq 0.$ Then an arbitrary automorphism of the corner  $eAe$ extends to an automorphism of the algebra $A.$
\end{lemma}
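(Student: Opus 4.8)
The statement to prove is: if $A$ is a unital locally matrix algebra with a nonzero idempotent $e$, then every automorphism $\varphi$ of the corner $eAe$ extends to an automorphism of $A$. The plan is to reduce this to Lemma \ref{Lem_II.2.3_new}, the case where the subalgebra is a finite matrix algebra containing the unit. The obstacle is that $eAe$ is in general infinite-dimensional, so $\varphi$ need not be inner and we cannot conjugate by a single element. The idea is to "absorb" $e$ into a finite matrix subalgebra of $A$ and push $\varphi$ out to an automorphism of that matrix subalgebra, or rather to realize $A$ as an ascending union on which $\varphi$ acts compatibly.

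First I would note that since $A$ is unital, $1-e$ is also an idempotent (possibly zero; if $1-e=0$ the lemma is trivial, so assume $e\neq 1$). By the first paragraph of the proof of Theorem \ref{Th1_Spec}, for any finitely many idempotents there is a matrix subalgebra $M_n(\mathbb{F})\subseteq A$ with $1\in M_n(\mathbb{F})$ containing them; in particular I can choose $M_n(\mathbb{F})$ containing $e$. Then $e$ is an idempotent of $M_n(\mathbb{F})$ of some rank $r$, and we may assume the matrix units are chosen so that $e=e_{11}+\cdots+e_{rr}$. Now $e\,M_n(\mathbb{F})\,e\cong M_r(\mathbb{F})$ sits inside $eAe$ as a unital matrix subalgebra, and $eAe$ is the union of such finite matrix subalgebras $eBe$ where $B$ ranges over matrix subalgebras of $A$ containing $e$. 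The key step is then to build, by a back-and-forth / direct-limit argument, an ascending chain $B_1\subseteq B_2\subseteq\cdots$ of matrix subalgebras of $A$ all containing $e$, with $A=\bigcup B_i$, together with automorphisms $\psi_i$ of $B_i$ extending one another and extending $\varphi$ restricted to $eB_ie$; the union $\bigcup\psi_i$ is then the desired automorphism of $A$.

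To make the chain step work I would proceed as follows. Suppose inductively that $B_i\subseteq A$ is a matrix subalgebra containing $e$ and that $\psi_i\in\operatorname{Aut}(B_i)$ restricts to $\varphi|_{eB_ie}$ on the corner $eB_ie$. Given a new element $a\in A$, choose (again by the argument of Theorem \ref{Th1_Spec}) a matrix subalgebra $B_{i+1}\supseteq B_i$ with $a\in B_{i+1}$ and $e\in B_{i+1}$. The corner $eB_{i+1}e$ is a finite matrix algebra on which $\varphi$ acts; I must extend $\psi_i$ to some $\psi_{i+1}\in\operatorname{Aut}(B_{i+1})$ agreeing with $\varphi$ on $eB_{i+1}e$. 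For this, observe that inside $B_{i+1}$ the subalgebras $B_i$ and $eB_{i+1}e$ together generate a subalgebra to which the two partial maps $\psi_i$ and $\varphi|_{eB_{i+1}e}$ glue (they agree on the overlap $eB_ie$); since everything in sight is a finite matrix algebra, Lemma \ref{Lem_II.2.3_new} applied repeatedly — first extending $\psi_i$ from $B_i$ to an automorphism of $B_{i+1}$, then correcting by an inner automorphism of $eB_{i+1}e$ (which extends to $B_{i+1}$ by the Peirce-decomposition device of Lemma \ref{Lem_II.2.3_new}) to make it match $\varphi$ on the corner — produces $\psi_{i+1}$. The main obstacle, and the point requiring care, is exactly this compatibility of the two partial automorphisms at each stage: one must check that the "correction" inner automorphism of the corner can be chosen to fix $\psi_i(B_i)\cap eB_{i+1}e$, equivalently that the Skolem–Noether-type adjustment inside the finite matrix algebra $eB_{i+1}e$ can be made relative to the already-fixed subalgebra. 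Once the chain and the compatible family $\{\psi_i\}$ are constructed, $\widetilde\varphi=\bigcup_i\psi_i$ is an algebra automorphism of $A=\bigcup_iB_i$ extending $\varphi$, which completes the proof.
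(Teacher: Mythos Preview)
Your inductive approach differs from the paper's and, as written, has two genuine gaps.

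First, you assert that ``the corner $eB_{i+1}e$ is a finite matrix algebra on which $\varphi$ acts'' and demand $\psi_{i+1}|_{eB_{i+1}e}=\varphi|_{eB_{i+1}e}$. But $\varphi$ is only an automorphism of $eAe$; nothing in your construction forces $\varphi(eB_{i+1}e)=eB_{i+1}e$. Since $\psi_{i+1}\in\mathrm{Aut}(B_{i+1})$ \emph{does} preserve $eB_{i+1}e$, the two maps cannot agree on that corner unless $\varphi$ preserves it as well. Trying to arrange this by enlarging $B_{i+1}$ to contain $\varphi(eB_{i+1}e)$ changes the corner and leads to an infinite regress.

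Second, the compatibility condition you isolate is too weak. You say the correcting inner automorphism of the corner must fix $\psi_i(B_i)\cap eB_{i+1}e=eB_ie$; what you actually need is that its extension to $B_{i+1}$ fix \emph{all} of $B_i$. If the correction is conjugation by some $y\in eB_{i+1}e$ centralizing $eB_ie$, the Peirce extension from Lemma~\ref{Lem_II.2.3_new} is conjugation by $y+(1-e)$, and this need not centralize $B_i$: writing $B_{i+1}=B_i\otimes_{\mathbb F}C'$ with $C'$ the centralizer of $B_i$, one has $eB_{i+1}e=eB_ie\otimes C'$ and $y=e\otimes c$ for some $c\in C'$, but $e\otimes c+(1-e)\otimes 1$ commutes with $b\otimes 1$ only when $eb=be$. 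The repair is to conjugate by $1\otimes c$ instead --- and that is precisely the tensor-decomposition device the paper uses. The paper applies it \emph{once}, globally, with no induction: after a single Skolem--Noether step reducing to the case where $\varphi$ fixes a finite corner $eA_1e$ pointwise, one writes $A=A_1\otimes_{\mathbb F}C$ (hence $eAe=eA_1e\otimes C$), observes that $\varphi=\mathrm{id}\otimes\theta$ for some $\theta\in\mathrm{Aut}(C)$, and takes $\tilde\varphi=\mathrm{id}_{A_1}\otimes\theta$.
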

\begin{proof} Suppose at first that an automorphism  $\varphi$ of the algebra $eAe$ is inner, and there exists an element  $x_e \in eAe$ that is invertible  in the algebra  $eAe$ such that  $ \varphi(a) = x_e^{-1} a x_e $ for an arbitrary element  $a  \in  e\,A\,e.$ The element $x=x_e + (1-e)$ is invertible in the algebra $A.$ So, conjugation by the element $x$ extends $\varphi.$

Now let $\varphi$ be an arbitrary automorphism of the corner $eAe.$ Let $A_1\subseteq A$ be a subalgebra such that $ 1,$ $ e  \in  A_1$ and $A_1  \cong  M_m(\mathbb{F})$ for some $m  \geq  1.$ Consider  $A_2 \subseteq A$ such that  $ A_1  \subseteq   A_2,$ $\varphi(e\,A_1\, e)  \subseteq  A_2$ and $ A_2   \cong  M_n(\mathbb{F})$ for some $n\geq 1.$ Consider the embedding $$\varphi : \ e\,A_1\, e \ \rightarrow \ \varphi \ (e \,A_1\, e) \ \subseteq \ e\,A_2\, e$$ that preserves the identity element $e.$ By Skolem--Noether Theorem (see, \cite{Drozd_Kirichenko}) there exists an invertible element $x_e \in eA_2 e$ such that   $ \varphi(a)  =  x_e^{-1}  a  x_e$ for an arbitrary element $ a  \in e\,A_1\, e. $

As noticed above, there exists an automorphism $\psi $ of the algebra $A$ that extends the automorphism $ eAe  \rightarrow  eAe,$ $a \mapsto  x_e^{-1}  a  x_e. $ The composition $\psi^{-1}\circ \varphi$ leaves all elements of the algebra  $eA_1 e$ fixed. Since it is sufficient to prove that the  automorphism $ \psi^{-1}\circ \varphi  \in  \text{Aut}  (eAe)$ extends to an  automorphism of $A$ we will assume without loss of generality that the  automorphism  $\varphi \in \text{Aut}  (eAe)$ fixes all elements of $eA_1 e.$

Let $C$ be the centraliser of the subalgebra $A_1$ in $A.$ Then $A =  A_1  \otimes_{\mathbb{F}}  C $ and $e Ae  =  e A_1 e  \otimes_{\mathbb{F}}  C.$ Since the subalgebra $e  \otimes_{\mathbb{F}}  C $ is the centraliser of  $e  A_1   e  \otimes_{\mathbb{F}}  C $ in the algebra $eAe$ it follows that $e  \otimes_{\mathbb{F}}  C $ is invariant with respect to the automorphism $\varphi.$ Hence, there exists an automorphism $\theta \in \text{Aut} (C)$ such that $ \varphi  (a  \otimes  c)  =   a  \otimes  \theta(c) $ for all elements $ a  \in  e A  e,$ $c  \in  C. $ So, the automorphism $ \tilde{\varphi}  (a  \otimes  c)  =  a  \otimes  \theta(c), $ $ a   \in  A_1,$ $ c  \in  C,$ extends $\varphi.$ This completes the proof of the lemma.  \end{proof}

\begin{lemma}\label{Lem_II.2.5_new}
Let $A$ be a unital locally matrix algebra with nonzero idempotents $e_{1},$ $e_{2}.$ An arbitrary isomorphism $\varphi \colon  e_{1}  A  e_{1}  \rightarrow  e_{2}  A  e_{2}$ extends to an automorphism of the algebra  $A.$
\end{lemma}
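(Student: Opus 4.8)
The goal is to extend an isomorphism $\varphi\colon e_1Ae_1\to e_2Ae_2$ between two corners of a unital locally matrix algebra $A$ to an automorphism of $A$. The strategy is to reduce to Lemma~\ref{Lem_II.2.4_new} by producing an \emph{inner} automorphism of $A$ that carries the corner $e_1Ae_1$ onto $e_2Ae_2$; after composing $\varphi$ with its restriction we are left with an automorphism of a single corner, which extends by Lemma~\ref{Lem_II.2.4_new}. So the crux is: show $e_1$ and $e_2$ are conjugate in $A$ by an invertible element whenever $e_1Ae_1\cong e_2Ae_2$.
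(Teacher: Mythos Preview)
Your plan is exactly the paper's approach. The crux you identify---that $e_1$ and $e_2$ are conjugate in $A$ whenever $e_1Ae_1\cong e_2Ae_2$---is established there by choosing a subalgebra $A_1\cong M_n(\mathbb{F})$ with $1,e_1,e_2\in A_1$, observing that $\mathbf{st}(e_iAe_i)=(r_i/n)\,\mathbf{st}(A)$ forces the matrix ranks $r_1=r_2$, so $e_1,e_2$ are conjugate by an invertible element of $A_1\subseteq A$; composing with the resulting inner automorphism reduces to Lemma~\ref{Lem_II.2.4_new} exactly as you describe.
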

\begin{proof} There exists a subalgebra $A_1\subseteq A$ such that $1,$ $e_{1},$ $e_{2}\in A_1$ and $A_1 \cong M_n(\mathbb{F})$ for some $n\geq 1.$ Let $r_i$ be the matrix range of the idempotent $e_i$ in  $A_1, $ $i=1,2.$ In \cite{Morita_BO}, it was shown that $$\mathbf{st} \left ( e_{1}\ A \ e_{1} \right )\ = \ \frac{r_{1}}{n} \ \cdot \ \mathbf{st} (A),\quad \mathbf{st} \left( e_{2}\ A\ e_{2} \right)\ =\ \frac{r_{2}}{n} \ \cdot \ \mathbf{st}  (A).$$ Since  $e_{1} Ae_{1}  \cong   e_{2}  A  e_{2}$ it follows that $ r_{1} =  r_{2}.$ In the  matrix algebra $M_n(\mathbb{F})$ any two idempotents of the same range are conjugate via an automorphism. Hence, the idempotents $e_1,$ $e_2$ are  conjugate via an automorphism of $A_1.$ By Lemma \ref{Lem_II.2.3_new}, an arbitrary automorphism of $A_1$ extends to an automorphism of the algebra $A.$ Now the assertion of the lemma follows from Lemma \ref{Lem_II.2.4_new}. \end{proof}

\begin{lemma}\label{Lem_II.2.6_new} Let $A,$ $B$ be isomorphic unital locally matrix algebras with nonzero idempotents $e\in A,$  $f \in B.$   An arbitrary isomorphism $e  A  e  \rightarrow   f  B  f$ extends to an  isomorphism $ A  \rightarrow  B.$
\end{lemma}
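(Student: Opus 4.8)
The plan is to transport the given isomorphism along an arbitrary isomorphism $A\to B$ and then invoke Lemma \ref{Lem_II.2.5_new}, which already handles the case of two corners sitting inside a \emph{single} unital locally matrix algebra. First I would fix an isomorphism $\psi\colon A\to B$, which exists by hypothesis. Since a ring homomorphism preserves the Peirce relations $ex=xe=x$, the element $e':=\psi(e)$ is a nonzero idempotent of $B$ and $\psi$ restricts to an isomorphism $\psi_0\colon eAe\to e'Be'$ of corners; the corner $e'Be'$ is again a unital locally matrix algebra.

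Next, given the isomorphism $\varphi\colon eAe\to fBf$ to be extended, I would form the composite $\chi_0:=\varphi\circ\psi_0^{-1}\colon e'Be'\to fBf$. This is an isomorphism between two corners of the one unital locally matrix algebra $B$, so Lemma \ref{Lem_II.2.5_new} provides an automorphism $\chi\in\text{Aut}(B)$ extending $\chi_0$. The desired isomorphism is then $\Phi:=\chi\circ\psi\colon A\to B$: it is an isomorphism as a composite of isomorphisms, and for every $a\in eAe$ one has $\psi(a)\in e'Be'$, whence $\Phi(a)=\chi(\psi(a))=\chi_0(\psi_0(a))=\varphi(a)$. Thus $\Phi$ extends $\varphi$, which proves the lemma.

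I do not expect a genuine obstacle here: all the substance has already been absorbed into Lemma \ref{Lem_II.2.5_new} (and, through it, into Lemmas \ref{Lem_II.2.3_new}--\ref{Lem_II.2.4_new} via Wedderburn's theorem, the Skolem--Noether theorem and the Peirce decomposition). The only point that requires a moment's care is the identification $\psi(eAe)=e'Be'$, which is immediate because $eAe=\{x\in A:ex=xe=x\}$ depends only on $e$. One could also remark that the separate hypothesis ``$eAe\cong fBf$'' is in fact redundant, being forced by the existence of $\varphi$.
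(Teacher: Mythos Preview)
Your argument is correct and essentially identical to the paper's: both fix an isomorphism between $A$ and $B$, transport the corner isomorphism so that it becomes an isomorphism between two corners of a single algebra, and then invoke Lemma~\ref{Lem_II.2.5_new}. The only cosmetic difference is that you push forward to $B$ and apply Lemma~\ref{Lem_II.2.5_new} there, whereas the paper pulls back to $A$ (and the roles of the letters $\varphi$ and $\psi$ are swapped).
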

\begin{proof} Let $\varphi: A \rightarrow  B ,$ $ \psi : e A e \rightarrow    f  B  f$ be isomorphisms. Then $$\varphi^{-1}\circ \psi \ \colon \  e \ A \ e \ \rightarrow \ \varphi^{-1}(f) \ A \ \varphi^{-1}(f)$$ is an isomorphism of two corners  of the algebra $A.$ By Lemma \ref{Lem_II.2.5_new}, $\varphi^{-1}\circ \psi$ extends to an automorphism $\chi$ of the algebra  $A.$ , the isomorphism  $\varphi\circ\chi$ extends $\psi.$ \end{proof}

\begin{lemma}\label{Lem_II.2.7_new} Let $A$ be a unital locally matrix algebra and let $s_{1},$ $s_{2}$ be Steinitz  numbers from $\emph{Spec} (A).$ Suppose that  $s_{2}/s_{1}>1.$ Let $e_{1}\in A$ be an idempotent such that $\mathbf{st}  (e_{1}Ae_{1})=s_{1}.$ Then there exists an idempotent $e_{2}>e_{1}$ such that  $\mathbf{st}  ( e_{2}Ae_{2} )=s_{2}.$
\end{lemma}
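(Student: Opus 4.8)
The plan is to realize $e_1$, the unit of $A$, and a witness idempotent for $s_2$ simultaneously inside one finite matrix subalgebra of $A$, and then carry out an elementary construction there.

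First, since $s_2 \in \text{Spec}(A)$, I would fix a nonzero idempotent $f \in A$ with $\mathbf{st}(fAf) = s_2$. As $A$ is a unital locally matrix algebra, I can then choose a subalgebra $B \subseteq A$ with $\{1, e_1, f\} \subseteq B$ and $B \cong M_n(\mathbb{F})$ for some $n \ge 1$; here $1$ is necessarily the identity element of $B$ (it acts as the identity on $B \subseteq A$), so $B$ is a unital matrix subalgebra and the relative-range machinery recalled in the proof of Theorem \ref{Th1_Spec} applies inside $B$.

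Next I would compute the matrix ranks $r_1 = \text{rk}(e_1)$ and $r_2 = \text{rk}(f)$ in $B \cong M_n(\mathbb{F})$; both are positive integers since $e_1, f \ne 0$. The identity $\mathbf{st}(eAe) = r(e)\cdot\mathbf{st}(A)$ from \cite{Morita_BO} gives $s_1 = (r_1/n)\,\mathbf{st}(A)$ and $s_2 = (r_2/n)\,\mathbf{st}(A)$, so $s_2/s_1 = r_2/r_1$, and the hypothesis $s_2/s_1 > 1$ forces $r_1 < r_2 \le n$. I would then choose a system of matrix units $\{e_{ij}\}_{1 \le i,j \le n}$ of $B$ adapted to $e_1$, that is with $e_1 = e_{11} + \cdots + e_{r_1 r_1}$, and set $e_2 = e_{11} + \cdots + e_{r_2 r_2}$. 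The routine verifications are: $e_2$ is an idempotent of matrix rank $r_2$ in $B$; $e_2 e_1 e_2 = e_1$, so $e_1 \in e_2 A e_2$, i.e. $e_2 \ge e_1$; $e_2 \ne e_1$ because their ranks differ, so $e_2 > e_1$; and, applying the relative-range identity once more (with $1$ the identity of $B$), $\mathbf{st}(e_2 A e_2) = (r_2/n)\,\mathbf{st}(A) = s_2$.

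I do not expect a genuine obstacle: the only step needing attention is the reduction in the first paragraph — choosing $B$ to contain $1$, $e_1$ and $f$ at once and noting that $1$ must be its unit, which is precisely what allows the relative-range formula to be applied to both $e_1$ and $e_2$ within $B$. Everything downstream is linear algebra in $M_n(\mathbb{F})$.
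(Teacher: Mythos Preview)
Your argument is correct and follows the same outline as the paper's proof: pick a witness idempotent $f$ for $s_2$, embed $1,e_1,f$ together in a matrix subalgebra $B\cong M_n(\mathbb{F})$, use the relative-range formula to see that $r_1<r_2$, and then produce an idempotent of rank $r_2$ in $B$ containing $e_1$. The only difference is in this last step. The paper diagonalizes $e_1$ and $e'$ separately via automorphisms $\varphi,\psi$ of $A_1$, extends these to automorphisms of $A$ by Lemma~\ref{Lem_II.2.3_new}, and sets $e_2=\widetilde{\varphi}^{\,-1}(\psi(e'))$. You instead choose a single system of matrix units of $B$ adapted to $e_1$ and take $e_2=e_{11}+\cdots+e_{r_2r_2}$ directly. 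Your route is slightly more elementary: it avoids invoking the automorphism-extension lemma altogether, since the diagonalization of $e_1$ already hands you an idempotent of any larger rank dominating it, and the relative-range identity in $A$ then gives $\mathbf{st}(e_2Ae_2)=s_2$ immediately. Both arguments ultimately rest on the same linear-algebra fact that idempotents in $M_n(\mathbb{F})$ are diagonalizable.
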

\begin{proof} Since $s_{2}\in \text{Spec}  (A)$ there exists an idempotent $e\,'\in A$ such that  $\mathbf{st}  (e\,'Ae\,') = s_{2}.$ Choose a subalgebra $A_1\subseteq A$ such that  $e_{1},$ $e\,'\in A_1$ and $A_1 \cong M_{n}(\mathbb{F}).$

Let $r_1,$ $r_2$ be the matrix ranges of $e_1,$ $e\,'$ in $ M_{n}(\mathbb{F}),$ respectively. In \cite{Morita_BO}, it was  shown that  $$\mathbf{st}   ( e_{1} \ A\ e_{1} ) \ = \ s_{1} \ = \ \frac{r_{1}}{n} \ \mathbf{st} (A), \quad \mathbf{st}  ( e\,' \ A \ e\,'  ) \ = \ s_{2} \ = \ \frac{r_{2}}{n} \ \mathbf{st}  (A).$$ Hence $r_{2}>r_{1}.$ Since every idempotent in the algebra $ M_{n}(\mathbb{F})$ is diagonalizable there exist automorphisms $\varphi,$ $\psi$ of the algebra $A_1$ such that $\psi(e\,')>\varphi (e_1).$  By Lemma \ref{Lem_II.2.3_new}, the automorphisms $\varphi,$ $\psi$ extend to automorphisms $\widetilde{\varphi},$ $\widetilde{\psi}$ of the algebra $A,$ respectively.

Let $ e_2=\widetilde{\varphi}\,^{-1}\left ( \psi\left( e\,'\right) \right ).$ Then $ e_{2}> e_1$ and $ \mathbf{st} ( e_{2} A e_{2} )  =  s_{2},$ which completes the proof of the lemma. \end{proof}

\begin{proof}[Proof of Theorem $\ref{Th_II.2.1_new}$ $(2)$] Let $A,$ $B$ be countable--dimensional locally matrix algebras, $\text{Spec} (A)=\text{Spec} (B).$
Choose bases $a_{1},$ $a_{2},$ $\ldots$ and $b_{1},$ $ b_{2},$ $\ldots$ in the algebras $A,$ $B,$ respectively.

We will construct ascending chains of corners $\{0\}  = A_{0}  \subset   A_{1}  \subset   A_{2}  \subset   \cdots$ in the algebra $A$ and $\{0\}  =  B_{0}  \subset  B_{1}  \subset  B_{2} \subset  \cdots $ in the algebra $B,$ such that  $$\bigcup_{i=0}^{\infty} \ A_{i} \ = \ A, \ \ \ \ \bigcup_{i=0}^{\infty} \ B_{i} \ = \ B$$ and $ a_{1}, $ $ \ldots,$ $ a_{i}  \in   A_{i},$ $ b_{1},$ $ \ldots,$ $ b_{i}  \in   B_{i},$ $\mathbf{st}  ( A_{i} )  =  \mathbf{st}  ( B_{i}  )$ for all $i \geq 1.$

Suppose that corners $\{0\}  = A_{0}  \subset   A_{1}  \subset   A_{2}  \subset   \cdots  \subset  A_n,$ $\{0\}  =  B_{0}  \subset B_{1}  \subset  B_{2}  \subset  \cdots  \subset  B_n$ have already been selected, $n \geq 0.$ There exist corners $A\,' \subset A,$ $B\,' \subset B$ in the  algebras  $A,$ $B,$ respectively, such that $A_n \subset  A\,',$ $a_{n+1}  \in  A\,'$ and $ B_n  \subset  B\,',$ $ b_{n+1}  \in  B\,' .$ The  Steinitz  numbers $\mathbf{st}  (A\,'),$ $\mathbf{st}  (B\,')$ lie in the same saturated subset of  $\mathbb{SN},$ therefore, they are rationally connected.

  Suppose that $\mathbf{st} (B\,')  \geq   \mathbf{st}  (A\,').$ Let $=e\,' $ be an idempotent of the algebra $A$ such that $A\,'=e\,'A e\,'.$ The Steinitz  number $\mathbf{st} (B\,')$ lies in  $\text{Spec} (A).$ Hence, by Lemma \ref{Lem_II.2.7_new}, there exists an idempotent $e\in A$ such that  $e  \geq   e\,' $ and $\mathbf{st}  (e A  e)  =  \mathbf{st}  (B\,'). $ Choose $ A_{n+1}  =  e  A   e, $ $ B_{n+1}  =  B\,'.$ The chains $\{0\}  = A_{0}  \subset   A_{1}  \subset   A_{2}  \subset   \cdots$  and $\{0\}  =  B_{0}  \subset  B_{1}  \subset  B_{2} \subset  \cdots $ have been constructed.

By Lemma \ref{Lem_II.2.6_new}, every isomorphism $A_i \rightarrow B_i$ extends to an isomorphism $A_{i+1} \rightarrow B_{i+1}.$ This gives rise to a sequence of isomorphisms  $\varphi_i : A_i   \rightarrow   B_i, $ $ i  \geq  0,$ where each  $\varphi_{i+1}$ extends  $\varphi_i.$ Taking the union  $\cup_{i\geq 0}  \varphi_i $ we get an isomorphism from the algebra $A$ to the algebra $B.$  This completes the proof of theorem. \end{proof}

\begin{proof}[Proof of Theorem $\ref{Th4_Spec}$] It is easy to see that a locally matrix algebra $A$ is a unital if and only if the set of idempotents of $A$ has a largest element:  an identity. This is equivalent to $\text{Spec}(A)$ containing a  largest Steinitz  number. Among saturated sets of Steinitz  numbers  only $\{1,2,\ldots,n\}$ and $S(r,s),$ $s\in \mathbb{SN} \diagdown \mathbb{N},$ $r=u/v;$ $u$ and $v$ are coprime natural numbers, $v\in \Omega(s),$ satisfy this assumption. \end{proof}

\section{Embeddings of locally matrix algebras}

\begin{lemma}\label{Lem_II.3.1_new} Let $S_1,$  $S_2$ be saturated sets of Steinitz  numbers. Then either $S_1 \bigcap S_2 = \emptyset$ or one of the sets $S_1,$ $S_2$ contains the other one. \end{lemma}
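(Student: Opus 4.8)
The plan is to exploit the classification of saturated sets from Theorem~\ref{Th_II.1.11_new} together with the structural observation that every saturated set is determined by a single Steinitz number in it (up to the recalibration of the parameter $r$ appearing in Lemmas~\ref{Lem_II.1.3_new}, \ref{Lem_II.1.5_new} and \ref{Lem_II.1.8_new}). First I would dispose of the trivial cases: if $S_1$ or $S_2$ is one of the sets $\{1,\dots,n\}$ or $\mathbb{N}$, then all its members are finite, so by condition~$1)$ a nonempty intersection forces the other set to consist of finite numbers too; then both are saturated subsets of $\mathbb{N}$, and by Lemma~\ref{Lem_II.1.10_new} each is an initial segment $\{1,\dots,m\}$ (or all of $\mathbb{N}$), and initial segments are linearly ordered by inclusion, so one contains the other.

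Next I would treat the case where both sets consist of infinite Steinitz numbers. Suppose $s \in S_1 \cap S_2$. By Lemma~\ref{Lem_II.1.3_new}(2), if $S_1$ is of infinite type then $S_1 = S(\infty,s)$, and if $S_2$ is of infinite type then $S_2 = S(\infty,s)$, so in that situation $S_1 = S_2$ and we are done. So assume $S_1$ is of finite type (the case $S_2$ of finite type is symmetric); by Lemma~\ref{Lem_II.1.9_new}, $S_1 = S(r_1,s)$ or $S^{+}(r_1,s)$ for $r_1 = r_{S_1}(s)$. If $S_2$ is of infinite type then $S_2 = S(\infty,s) \supseteq S_1$, so $S_2$ contains $S_1$. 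Thus the only remaining case is both $S_1$, $S_2$ of finite type, with $S_1 = S(r_1,s)$ or $S^{+}(r_1,s)$ and $S_2 = S(r_2,s)$ or $S^{+}(r_2,s)$, where $r_i = r_{S_i}(s) \in [1,\infty)$.

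In that last case I would compare $r_1$ and $r_2$. Both $S(r,s)$ and $S^{+}(r,s)$ are of the form $\{(a/b)s \mid b \in \Omega(s),\ a \leq r b \text{ (resp. } a < rb)\}$, so if $r_1 < r_2$ then every constraint $a \leq r_1 b$ (or $a < r_1 b$) implies $a < r_2 b$, hence $a \leq r_2 b$, giving $S_1 \subseteq S_2$; symmetrically $r_2 < r_1$ gives $S_2 \subseteq S_1$. If $r_1 = r_2 =: r$ and $r$ is irrational, or $r = u/v$ with $\gcd(u,v)=1$ and $v \notin \Omega(s)$, then $S(r,s) = S^{+}(r,s)$ and $S_1 = S_2$; and if $r = u/v$ with $v \in \Omega(s)$ we still have $S^{+}(r,s) \subsetneq S(r,s)$, so whichever combination occurs, one of $S_1$, $S_2$ contains the other. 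The main obstacle — really the only subtlety — is making sure the two sets are compared \emph{relative to the same base Steinitz number} $s$ before invoking the classification; once a common $s \in S_1 \cap S_2$ is fixed and the recalibration lemmas (\ref{Lem_II.1.5_new}, \ref{Lem_II.1.8_new}) are used to rewrite each $S_i$ in terms of $s$, the comparison reduces to comparing two real numbers in $[1,\infty]$ (with $\infty$ for the infinite type) together with a tie-break by the strict-versus-nonstrict flag, which is an elementary case analysis.
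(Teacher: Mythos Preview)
Your proposal is correct and follows essentially the same route as the paper: fix a common element $s\in S_1\cap S_2$, use Lemma~\ref{Lem_II.1.10_new} to handle the finite case, and in the infinite case invoke Lemmas~\ref{Lem_II.1.3_new}(2) and~\ref{Lem_II.1.9_new} to write each $S_i$ as $S(r_i,s)$, $S^+(r_i,s)$, or $S(\infty,s)$ for that same $s$, then compare the parameters $r_i$. The paper's proof is terser (it folds the infinite-type case into the notation $r_i\in[1,\infty)\cup\{\infty\}$ and omits your explicit verification that $r_1<r_2$ forces $S_1\subseteq S_2$), but the argument is the same.
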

\begin{proof} Let $s\in S_1 \bigcap S_2.$ If $s\in \mathbb{N}$ then, by Lemma \ref{Lem_II.1.10_new}, each set $S_i$ is either a segment $[1,n],$ $n\geq 1,$ or the whole $\mathbb{N}.$ In this case  the assertion of the lemma is obvious.

Suppose that the number $s$ is infinite. Then by Theorem \ref{Th_II.1.11_new}, $S_i  =  S  (r_i,s) $ or $S_i  =  S^{+}  (r_i,s), $ where $  r_i  =  r_{S_i}(s)  \in  [1,  \infty)   \cup  \{\infty \}, $ $ i = 1,  2.$ Clearly, if $ r_{S_1}(s)  <   r_{S_2}(s)$ then $  S_1  \subsetneqq  S_2.$ If $r_{S_1}(s)  =  r_{S_2}(s) $ then $$ S_1, \ S_2 \ = \
\left[ \begin{array}{lc}
S  (r,s)    \\
S^{+}  (r,s)  \\
S  (\infty,s)
\end{array}
\right.$$
and  $S^{+}  (r,s)  \subseteq  S  (r,s)  \subset   S  (\infty,s)$ for any $ r  \in  [1,  \infty).$ This completes the proof of the lemma. \end{proof}

Let  $A$ be a locally matrix algebra. A subalgebra  $B \subseteq A$ is called an \emph{approximative corner} of $A$ if $B$ is the union of an increasing chain of corners. In other words, there exist idempotents  $e_0 ,$ $ e_1,$ $e_2,$ $\ldots$ such that $$e_0 \ A  \ e_0\  \subseteq \ e_1 \ A \ e_1 \ \subseteq  \ e_2 \ A  \ e_2 \ \subseteq \ \cdots, \ \ \ \  B \ = \ \bigcup_{i=0}^{\infty}  \ e_i \ A \ e_i.$$

It is easy to see that an approximative corner  of a locally matrix algebra is a locally matrix algebra.

\begin{theorem}\label{Lem_II.3.2_new} Let  $A,$ $B $ be countable--dimensional locally matrix algebras. Then $B$ is embeddable in $A$ as an approximative corner if and only if
 $\emph{Spec}  (B)$ $ \subseteq $ $ \emph{Spec}  (A).$
\end{theorem}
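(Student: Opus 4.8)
The plan is to prove both directions. For the easy direction ($B$ embeddable as an approximative corner $\Rightarrow$ $\text{Spec}(B)\subseteq\text{Spec}(A)$), I would argue as follows. Suppose $B=\bigcup_{i\ge 0} e_iAe_i$ with $e_0Ae_0\subseteq e_1Ae_1\subseteq\cdots$. Every idempotent $f\in B$ lies in some corner $e_iAe_i$, and inside $B$ one has $fBf=f(e_iAe_i)f$; but an idempotent $f\le e_i$ also satisfies $fAf=f(e_iAe_i)f$, so $fBf=fAf$ and hence $\mathbf{st}(fBf)=\mathbf{st}(fAf)\in\text{Spec}(A)$. Thus $\text{Spec}(B)\subseteq\text{Spec}(A)$.

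For the nontrivial direction, assume $\text{Spec}(B)\subseteq\text{Spec}(A)$. By Theorem \ref{Th1_Spec} both spectra are saturated; by Lemma \ref{Lem_II.3.1_new} the inclusion is forced to be ``nice,'' so I may invoke the structure theory freely. I would build the embedding by a back-and-forth-style exhaustion of $B$ only (one-sided, since we do not need surjectivity): fix a basis $b_1,b_2,\ldots$ of $B$, and construct an ascending chain of corners $\{0\}=B_0\subset B_1\subset B_2\subset\cdots$ of $B$ with $b_1,\ldots,b_i\in B_i$ and $\bigcup B_i=B$, together with corners $A_0\subset A_1\subset A_2\subset\cdots$ of $A$ and isomorphisms $\varphi_i\colon B_i\to A_i$, each extending the previous one. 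At stage $n$, having $\varphi_n\colon B_n\to A_n$, I enlarge $B_n$ inside $B$ to a corner $B_{n+1}=fBf$ containing $b_{n+1}$; then $\mathbf{st}(B_{n+1})\in\text{Spec}(B)\subseteq\text{Spec}(A)$, and since $\mathbf{st}(B_{n+1})$ is rationally connected to $\mathbf{st}(A_n)$ and at least divides up appropriately, I use Lemma \ref{Lem_II.2.7_new} inside $A$ (after first realizing $A_n$ as $e'Ae'$ via the current isomorphism) to find an idempotent $e\ge e'$ with $\mathbf{st}(eAe)=\mathbf{st}(B_{n+1})$; set $A_{n+1}=eAe$. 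Since $B_n\cong A_n$ are unital locally matrix algebras with $B_n$ a corner of $B_{n+1}$ and $A_n$ a corner of $A_{n+1}$ of the same Steinitz number, Glimm's theorem gives $B_{n+1}\cong A_{n+1}$, and Lemma \ref{Lem_II.2.6_new} lets me extend $\varphi_n$ to $\varphi_{n+1}\colon B_{n+1}\to A_{n+1}$. Taking $\varphi=\bigcup_i\varphi_i$ yields an embedding of $B$ onto the approximative corner $\bigcup_i A_i$ of $A$.

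The main obstacle is the case analysis guaranteeing that the chain can always be continued, i.e. that $\mathbf{st}(B_{n+1})$ is always achievable by an idempotent of $A$ above the image of $B_n$. The subtlety is that Lemma \ref{Lem_II.2.7_new} as stated requires $\mathbf{st}(B_{n+1})\ge \mathbf{st}(A_n)$ (a growing chain), whereas a priori $\mathbf{st}(B_{n+1})$ might be \emph{smaller} than $\mathbf{st}(A_n)$; so I must first enlarge $B_{n+1}$ further (still a corner of $B$ containing $b_{n+1}$) to arrange $\mathbf{st}(B_{n+1})\ge\mathbf{st}(A_n)$, using condition $3)$ of saturatedness together with the fact that corners of $B$ realizing arbitrarily large elements of $\text{Spec}(B)$ exist because every element of $B$ lies in some matrix subalgebra whose identity is an idempotent with large Steinitz number. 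One checks that such enlargement is possible precisely because $\text{Spec}(B)$ is saturated and $\text{Spec}(B)\subseteq\text{Spec}(A)$ forces the two spectra to lie in a common saturated set by Lemma \ref{Lem_II.3.1_new}, so rational connectedness and the divisibility conditions propagate. Once monotonicity of the chain $\mathbf{st}(A_n)=\mathbf{st}(B_n)$ is secured, the remaining verifications are the routine applications of Lemmas \ref{Lem_II.2.6_new} and \ref{Lem_II.2.7_new} and Glimm's theorem already used in the proof of Theorem \ref{Th_II.2.1_new}$(2)$.
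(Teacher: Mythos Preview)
Your approach is essentially the paper's, with one organizational difference: instead of first building the chain of corners $e_iAe_i$ in $A$ and then invoking Theorem~\ref{Th_II.2.1_new}(2) as a black box to conclude $\bigcup_i e_iAe_i\cong B$, you interleave the isomorphism extensions (via Lemma~\ref{Lem_II.2.6_new}) into the inductive construction. Both routes work; the paper's is shorter because the back-and-forth extension of isomorphisms has already been done once inside the proof of Theorem~\ref{Th_II.2.1_new}(2).

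However, the ``main obstacle'' you flag is a phantom. Since $\varphi_n\colon B_n\to A_n$ is an isomorphism you have $\mathbf{st}(A_n)=\mathbf{st}(B_n)$, and since $B_{n+1}\supseteq B_n$ is a corner inclusion in $B$ the relative-range formula gives $\mathbf{st}(B_{n+1})\ge \mathbf{st}(B_n)=\mathbf{st}(A_n)$ automatically. No further enlargement of $B_{n+1}$ is needed, and the appeal to Lemma~\ref{Lem_II.3.1_new} and condition~3) of saturatedness is irrelevant at this step. The only edge case is equality $\mathbf{st}(B_{n+1})=\mathbf{st}(A_n)$, where Lemma~\ref{Lem_II.2.7_new} (which assumes $s_2/s_1>1$) does not literally apply; but then $B_{n+1}=B_n$ and you simply take $A_{n+1}=A_n$, $\varphi_{n+1}=\varphi_n$. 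The paper sidesteps this by treating unital $B$ separately, embedding it directly as a single corner of $A$ via Glimm's theorem, and reserving the chain construction for the strictly increasing non-unital case.
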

\begin{proof} If $B$ is an approximative corner of $A$ then  every corner of $B$ is a corner of $A$,  hence $\text{Spec}  (B)  \subseteq \text{Spec}  (A). $

Suppose now that $\text{Spec} (B)  \subseteq \text{Spec}  (A). $ If the algebra $B$ is unital then it embedds in the algebra $A$ as a corner. Indeed, the embedding $\text{Spec}  (B)  \subseteq \text{Spec}  (A) $ implies that there exists an idempotent  $e\in A$ such that  $ \mathbf{st}  (B) =   \mathbf{st}   (e A  e).$ By Glimm's Theorem \cite{Glimm}, we have $B\cong e A e.$

Suppose now that the algebra  $B$ is not unital. Then there exists a sequence of idempotents  $0  =  f_0,$ $f_1,$ $f_2,$ $ \ldots$ of algebra $ B$ such that   $$ \{0\} = \ f_0 \ B \ f_0  \ \subsetneqq  \ f_1 \ B \ f_1 \  \subsetneqq \  f_2 \ B \ f_2 \  \subsetneqq \  \cdots, \quad  \bigcup_{i= 0}^{\infty} \ f_i \ B \ f_i \ = \ B.$$ We will construct a sequence of idempotents  $ e_0,$ $ e_1, $ $ e_2,$ $\ldots$ in the algebra $A$ such that  $$ e_0 \ A \ e_0 \ \subsetneqq \ e_1 \ A \ e_1 \ \subsetneqq \ e_2 \ A \ e_2 \ \subsetneqq \ \cdots, \quad \mathbf{st}  (f_i \ B \ f_i) \ = \ \mathbf{st}  (e_i \ A \ e_i)$$ for an arbitrary $ i \geq  0.$ Let $e_0 =0.$ Suppose that we have already selected idempotents  $e_0,$ $e_1,$ $\ldots,$ $e_n\in A$ such that  $ e_0  A  e_0  \subset  e_1  A  e_1  \subset  \cdots  \subset  e_n  A  e_n $ and  $\mathbf{st} (e_i  A  e_i)  =  \mathbf{st}  (f_i  B  f_i),$ $ 0  \leq   i  \leq n.$ We have $$ \mathbf{st}  (f_{n+1} \ B \ f_{n+1}) \ > \ \mathbf{st}  (f_n \ B \ f_n) \ = \ \mathbf{st}  (e_n \ A \ e_n) $$ and $\mathbf{st}  (f_{n+1} B f_{n+1}) \in \text{Spec}(A).$  By Lemma \ref{Lem_II.2.7_new}, there exists an idempotent $e_{n+1}\in A$ such that  $ e_n  A  e_n  \subset  e_{n+1}  A    e_{n+1} $ and  $ \mathbf{st}  (e_{n+1} A  e_{n+1})  =  \mathbf{st}  (f_{n+1}  B  f_{n+1}),$ which  proves existence of a sequence $e_0,$ $e_1,$ $e_2,$ $\ldots.$

The union $$ A\,' \ = \ \bigcup_{i=0}^{\infty} \ e_i \ A \ e_i$$ is an approximative corner of the algebra $A.$ By Glimm's Theorem \cite{Glimm}, $e_i  A  e_i  \cong  f_i  B  f_i,$ $ i  \geq  1.$ By Lemma \ref{Lem_II.2.2_new},  $$ \text{Spec}  (A\,') \ = \ \bigcup_{i = 1}^{\infty} \ \text{Spec}  (e_i \ A \ e_i ) \quad \text{and} \quad \text{Spec}  (B) \  = \  \bigcup_{i= 1}^{\infty} \ \text{Spec} ( f_i  B  f_i).$$ Hence $ \text{Spec} (B) = \text{Spec} (A\,'). $ By Theorem $\ref{Th_II.2.1_new}$ $(2),$ we have  $A\,' \cong B,$ which completes the proof of the theorem. \end{proof}


\begin{thebibliography}{99}



\bibitem{Baranov2} A.~A.~Baranov,  Classification of the direct limits of involution simple associative algebras and the corresponding dimension groups, \emph{Journal of Algebra} \textbf{381} (2013) 73--95.



\bibitem{BezOl} O.~Bezushchak and  B.~Oliynyk, Unital locally matrix algebras  and Steinitz numbers, \emph{J. Algebra Appl.} \textbf{19}(9) (2020). Doi:10.1142/SO219498820501807.

\bibitem{BezOl_2} O.~Bezushchak and B.~Oliynyk, Primary decompositions of unital locally matrix algebras,  \emph{Bull. Math. Sci.} \textbf{10}(1) (2020). Doi:10.1142/S166436072050006X.



\bibitem{Morita_BO} O.~Bezushchak and B.~Oliynyk, Morita equivalent unital locally matrix algebras,  \emph{Algebra Discrete Math.} \textbf{29}(2) (2020) 173--179.

\bibitem{Diskme} J.~Dixmier, On some $C^{*}$-algebras considered by Glimm,  \emph{J. Funct. Anal.} \textbf{1} (1967) 182--203.

\bibitem{Drozd_Kirichenko} Yu.~A.~Drozd, V.~V.~Kirichenko, Finite dimensional algebras, \emph{Springer-Verlag, Berlin--Heidelberg--New York} (1994).


\bibitem{Glimm} J.~G.~Glimm,   On a certain class of operator algebras, {\em Trans. Amer. Math. Soc.} {\bf 95}(2) (1960) 318--340.



\bibitem{Jacobson_3} N.~Jacobson, \emph{Structure of rings}, Colloquium Publications, \textbf{37} (1956).

\bibitem{Kurochkin} V.~M.~Kurochkin. On the theory of locally simple and locally normal algebras, {\em Mat. Sb., Nov. Ser.} {\bf 22(64)}(3) (1948)  443--454.
\bibitem{Kurosh} A.~Kurosh, Direct decompositions of simple rings, 	{\em Rec. Math. [Mat. Sbornik] N.S.}  {\bf 11} ({\bf 53}) (3) (1942) 245--264.


\bibitem{ST} E.~Steinitz,  Algebraische Theorie der K{\"o}rper, {\em Journal f{\"u}r die reine und angewandte Mathematik} {\bf 137} (1910) 167--309.








\end{thebibliography}
\end{document}